\title[SPHERICAL FUNCTIONS AND STOLARSKY'S INVARIANCE PRINCIPLE] 
{SPHERICAL FUNCTIONS AND STOLARSKY'S INVARIANCE PRINCIPLE}
\author[M.M. SKRIGANOV]{M.M. SKRIGANOV}
\address{St. Petersburg Department of the Steklov Mathematical Institute 
of the Russian Academy of Sciences, 
27, Fontanka, St.Petersburg 191023, Russia}
\email{maksim88138813@mail.ru}
\keywords{Geometry of distances, discrepancies, spherical functions, projective 
spaces, Jacobi polynomials}
\subjclass[2010]{11K38, 22F30, 52C99}
\numberwithin{equation}{section}
\newtheorem{theorem}{Theorem}[section]
\newtheorem{lemma}{Lemma}[section]
\newtheorem{proposition}{Proposition}[section]
\newtheorem{corollary}{Corollary}[section]
\theoremstyle{remark}
\theoremstyle{remark}
\newtheorem{definition}{Definition}[section]
\def\dd{\mathrm{d}}
\def\Cc{\mathbb{C}}
\def\Ff{\mathbb{F}}
\def\Hh{\mathbb{H}}
\def\Oo{\mathbb{O}}
\def\Rr{\mathbb{R}}
\def\AAA{\mathcal{A}}
\def\BBB{\mathcal{B}}
\def\CCC{\mathcal{C}}
\def\DDD{\mathcal{D}}
\def\EEE{\mathcal{E}}
\def\FFF{\mathcal{F}}
\def\III{\mathcal{I}}
\def\MMM{\mathcal{M}}
\def\QQQ{\mathcal{Q}}
\DeclareMathOperator{\diam}{diam}
\renewcommand{\le}{\leqslant}
\renewcommand{\ge}{\geqslant}
\renewcommand{\Re}{\mathop{\mathrm{Re}}\nolimits}
\renewcommand{\Im}{\mathop{\mathrm{Im}}\nolimits}
\numberwithin{equation}{section}
\DeclareMathOperator{\Spin}{\mathrm{Spin}}
\DeclareMathOperator{\Tr}{\mathrm{Tr}}
\def\M{\mathcal M}
\def\la{\lambda}
\def\E{\mathcal E}
\def\O{\mathcal O}
\numberwithin{equation}{section}
\theoremstyle{plain}
\newcommand{\bp}{\begin{proof}}
\newcommand{\ep}{\end{proof}}
\newcommand{\bl}{\begin{lemma}}
\newcommand{\el}{\end{lemma}}
\newcommand{\bt}{\begin{theorem}}
\newcommand{\et}{\end{theorem}}
\newcommand{\bd}{\begin{definition}}
\newcommand{\ed}{\end{definition}}
\newcommand{\ba}{\begin{arrow}}
\newcommand{\ea}{\end{arrow}}
\begin{document}


%
%
%
%

\begin{abstract}
In the previous paper \cite{0},	
Stolarsky's invariance principle, known for point distributions on the 
Euclidean spheres 
\cite{33}, has been extended to the real, 
complex, 
and quaternionic 
projective spaces and the 
octonionic 
projective plane. 
Geometric features of these spaces as well as their  
models in terms of Jordan algebras have been used very essentially 
in the proof. In the present paper, a new pure analytic
proof of the extended Stolarsky's invariance principle is given,
relying 
on the theory of spherical functions on compact symmetric
Riemannian  manifolds of rank one. 
\end{abstract}

\maketitle

\thispagestyle{empty}

\section{Introduction and main results}\label{sec1}

{\it 1.1 Introduction.} 
In 1973 Kenneth~B.~Stolarsky \cite{33} established the following remarkable formula for point distributions on the Euclidean spheres.
Let $S^d=\{x\in \Rr^{d+1}:\Vert x\Vert=1\}$ be the standard $d$-dimensional unit sphere in $\Rr^{d+1}$
with the geodesic (great circle) metric $\theta$ and the Lebesgue measure
$\mu$ normalized by $\mu(S^d)=1$. 
We write $\CCC(y,t)=\{x\in S^d: (x,y) > t\}$ for the spherical cap
of height $t\in [-1,1]$ centered at 
$y\in S^d$. Here we write $(\cdot,\cdot)$ and $\Vert \cdot \Vert$ for 
the inner product and the Euclidean norm in $\Rr^{d+1}$.

For an $N$-point subset $\DDD_N\subset S^d$,
the spherical cap quadratic discrepancy is defined by
\begin{equation}
\lambda^{cap}[\DDD_N]=
\int_{-1}^1\int_{S^d}\left(\,\# \{|\CCC(y,t)\cap \DDD_N\}-N\mu(\CCC(y,t))\,\right)^2\dd\mu(y)\,
\dd t.
\label{eq1.1*}
\end{equation}
We introduce the sum of pairwise Euclidean distances 
between points of $\DDD_N$
\begin{equation}
\tau [\DDD_N]=\frac 12 \sum\nolimits_{x_1, x_2\in \DDD_N} \Vert x_1-x_2 \Vert
= \sum\nolimits_{x_1, x_2\in \DDD_N} \sin\frac 12 \theta (x_1,x_2),
\label{eq1.2*}
\end{equation}
and write $\langle \tau \rangle$ for the average value of the Euclidean
distance on $S^d$,       
\begin{equation}
\langle \tau \rangle =\frac 12 \iint\nolimits_{S^d\times S^d} \Vert y_1-y_2\Vert \,
d\mu (y_1) \, \dd\mu (y_2).
\label{eq1.12*}
\end{equation}

The study of the quantities \eqref{eq1.1*} and \eqref{eq1.2*} falls within 
the subjects of discrepancy theory and geometry of distances,
see  \cite{2, 6} and references therein.  
It turns out that the quantities \eqref{eq1.1*} and \eqref{eq1.2*} are 
not independent and are intimately 
related by the following remarkable identity
\begin{equation}
\gamma(S^d)\lambda^{cap}[\DDD_N]+\tau [\DDD_N]=\langle \tau \rangle N^2,
\label{eq1.3**}
\end{equation}
for an arbitrary $N$-point subset $\DDD_N\subset S^d$. Here $\gamma(S^d)$
is a positive constant independent of $\DDD_N$,
\begin{equation}
\gamma(S^d)=\frac{d\,\sqrt{\pi}\,\,\Gamma(d/2)}{2\,\Gamma((d+1)/2)}\, .
\label{eq1.3***}
\end{equation}

The identity \eqref{eq1.3**} is
known in the literature as \textit{Stolarsky's invariance principle}. 
Its original proof given in \cite{33}
has been simplified in \cite{8, 11}.

In our previous paper
\cite{0}  Stolarsky's invariance
principle \eqref{eq1.3**} has been extended to the real $\Rr P^n$,
the complex $\Cc P^n$, the quaternionic  $\Hh P^n$ projective spaces, and the 
octonionic $\Oo P^2$ projective plane.
Geometric features of such spaces as well as their  
models in terms of Jordan algebras have been used very essentially 
in the proof.
The aim of the present paper is to give an alternative pure analytic
proof relying on the theory of spherical functions.
\\

{\it 1.2 Discrepancies and metrics. $L_1$-invariance principles.}
Let us consider Stolarsky's invariance principle in a broader context.
Let $\M$ be a compact metric measure space with a fixed metric $\theta$ 
and a finite Borel measure $\mu$, normalized, for convenience, by 
\begin{equation}
\diam (\MMM,\theta)=\pi, \quad \mu (\MMM)=1,
\label{eq1.1}
\end{equation}
where 
$\diam (\EEE,\rho)=\sup \{ \rho(x_1,x_2): x_1,x_2\in \EEE\}$
denotes the diameter of a subset $\EEE\subseteq \MMM$  with respect to 
a metric $\rho$. 

We write $\BBB(y,r)=\{x\in\MMM:\theta (x,y)<r\}$ for the ball of radius $r\in \III$
centered at  $y\in \MMM$ and of volume $v(y,r)=\mu (\BBB(y,r))$. 
Here $\III = \{r=\theta(x_1,x_2): x_1,x_2\in \MMM\}$ denotes the set of all possible radii. 
If the space $\MMM$ is connected, we have $\III = [\,0,\pi\,]$.

We consider distance-invariant metric spaces for which 
the volume  of a
ball $v(r)=v(y,r)$ is independent of $y\in \MMM$, see \cite[p.~504]{24}.
The typical examples of distance-invariant spaces are homogeneous spaces 
$\MMM=G/H$ with $G$-invariant metrics
$\theta$ and  measures $\mu$.

For an $N$-point subset $\DDD_N\subset \MMM$, the ball quadratic discrepancy is 
defined by
\begin{equation}
\lambda[\xi,\DDD_N]=
\int_{\III}\int_{\MMM}\left(\,\# \{\BBB(y,r)\cap \DDD_N\}-Nv(r))\,\right)^2\,
\dd\mu(y)\, \dd \xi (r),
\label{eq1.3*}
\end{equation}
where $\xi$ is a finite measure on the set of radii $\III$.

Notice that for $S^d$ spherical caps and balls are 
related by  $\CCC(y,t)=\BBB(y,r)$, $t=\cos r$, and  the 
discrepancies \eqref{eq1.1*} and \eqref{eq1.3*} are related by
$\lambda^{cap}[\DDD_N]=\lambda[\xi^{\natural},\DDD_N]$, where  
$\dd\xi^{\natural}(r)=\sin r\,\dd r,\, r\in \III =[0,\pi]$.

The ball quadratic discrepancy \eqref{eq1.3*} can be written in the form
\begin{equation}
\lambda [\xi, \DDD_N] =
\sum\nolimits_{x_1,x_2\in \DDD_N} \la (\xi, x_1,x_2)
\label{eq1.7}
\end{equation}
with the kernel 
\begin{equation}
\lambda(\xi,x_1,x_2)=\int_\III\int_\MMM
\Lambda  (\BBB(y,r),x_1)\,\Lambda (\BBB(y,r),x_2)
\, \dd\mu (y)\,\dd\xi (r)\, ,
\label{eq1.6}
\end{equation}
where 
\begin{equation}
\Lambda  (\BBB(y,r),x)=\chi(\BBB(y,r),x)- v(r),
\label{eq1.6**}
\end{equation}
and $\chi(\E,\cdot)$ denotes the characteristic function of a subset $\E\subseteq\M$.

For an arbitrary metric $\rho$ on $\MMM$ we introduce 
the sum of pairwise distances  
\begin{equation}
\rho [\DDD_N] =\sum\nolimits_{x_1,x_2\in \DDD_N} \rho (x_1,x_2).
\label{eq1.10}
\end{equation}
and the average value 
\begin{equation}
\langle \rho \rangle =\int\nolimits_{\MMM\times\MMM} \rho (y_1,y_2) \,
\dd\mu (y_1) \, \dd\mu (y_2).
\label{eq1.12}
\end{equation}
We introduce the following symmetric difference metrics on the space $\M$
\begin{align}
\theta^{\Delta} (\xi, y_1,y_2) & =\frac 12\int_\III
\mu (\BBB(y_1,r)\Delta \BBB(y_2,r))
\, \dd\xi(r) \notag
\\
& =\frac 12\int_\III\int_{\MMM}\chi(\BBB(y_1,r)\Delta \BBB(y_2,r),y)
\,\dd\mu(y)\,\dd\xi(r),
\label{eq1.13}
\end{align}
where $\BBB(y_1,r)\Delta \BBB(y_2,r)=\BBB(y_1,r)\cup \BBB(y_2,r) \setminus 
\BBB(y_1,r)\cap \BBB(y_2,r)$
is the symmetric difference of the balls  $\BBB(y_1,r)$ and $\BBB(y_2,r)$.

In line with the definitions \eqref{eq1.10} and \eqref{eq1.12}, we put 
\begin{equation*}
\theta^{\Delta} [\xi,\DDD_N] =\sum\nolimits_{x_1,x_2\in \DDD_N} 
\theta^{\Delta} (\xi,x_1,x_2).
\end{equation*}
and
\begin{equation*}
\langle \theta^{\Delta}(\xi) \rangle  
= \int_{\MMM\times\MMM}\theta^{\Delta}(\xi,y_1,y_2)\,\dd\mu(y_1)\,\dd\mu(y_2)\,.
\label{eq1.19}
\end{equation*}

A direct calculation leads to the following result.
\begin{proposition}\label{prop1.1}
Let a compact metric measure space $\M$ 
be distance-invariant, then we have
\begin{align}
\lambda (\xi,y_1,y_2)+\theta^{\Delta}(\xi ,y_1,y_2) 
= \langle 
\theta^{\Delta} (\xi)\rangle .
\label{eq1.30}
\end{align}
In particular, we have the following invariance principle
\begin{equation}
\la [\,\xi,\DDD_N\,]+\theta^{\Delta}[\,\xi , \DDD_N\,]  = \langle 
\theta^{\Delta} (\xi)\rangle \, N^2
\label{eq1.31}
\end{equation}
for an arbitrary $N$-point subset $\DDD_N\subset \MMM$.  
\end{proposition}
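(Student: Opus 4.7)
The plan is to verify the pointwise identity \eqref{eq1.30} by directly manipulating the integrand in \eqref{eq1.6} and then derive \eqref{eq1.31} by summing over the $N^2$ pairs $(x_1,x_2)\in\DDD_N\times\DDD_N$ via \eqref{eq1.7}. The whole computation is elementary; the two inputs from the hypothesis are (a) symmetry of $\theta$, giving $x\in\BBB(y,r)\Leftrightarrow y\in\BBB(x,r)$, and (b) distance-invariance, giving $\mu(\BBB(y,r))=v(r)$ for every $y$.

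First I would swap the arguments inside the characteristic function: $\chi(\BBB(y,r),y_i)=\chi(\BBB(y_i,r),y)$. Substituting into \eqref{eq1.6}, expanding $\Lambda=\chi-v(r)$, and integrating in $y$ with $\int_{\MMM}\chi(\BBB(y_i,r),y)\,\dd\mu(y)=v(r)$, the cross terms cancel the constant and yield
\begin{equation*}
\int_{\MMM}\Lambda(\BBB(y,r),y_1)\Lambda(\BBB(y,r),y_2)\,\dd\mu(y)=\mu(\BBB(y_1,r)\cap\BBB(y_2,r))-v(r)^2.
\end{equation*}
Using $|A\cap B|=\tfrac12(|A|+|B|-|A\Delta B|)$ with $\mu(\BBB(y_i,r))=v(r)$, the right-hand side becomes $v(r)-\tfrac12\mu(\BBB(y_1,r)\Delta\BBB(y_2,r))-v(r)^2$. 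Integrating against $\dd\xi(r)$ and recognising the definition \eqref{eq1.13} of $\theta^{\Delta}$ produces
\begin{equation*}
\lambda(\xi,y_1,y_2)+\theta^{\Delta}(\xi,y_1,y_2)=\int_{\III}\bigl(v(r)-v(r)^2\bigr)\,\dd\xi(r),
\end{equation*}
a quantity manifestly independent of $y_1,y_2$.

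To identify this constant with $\langle\theta^{\Delta}(\xi)\rangle$, I would integrate the identity against $\dd\mu(y_1)\,\dd\mu(y_2)$. The $\lambda$-contribution vanishes because $\int_{\MMM}\Lambda(\BBB(y,r),y_1)\,\dd\mu(y_1)=v(r)-v(r)=0$, so the right-hand constant equals $\langle\theta^{\Delta}(\xi)\rangle$ and \eqref{eq1.30} follows. Summation over $x_1,x_2\in\DDD_N$ in view of \eqref{eq1.7} then gives \eqref{eq1.31} with the factor $N^2$ on the right coming from $\sum_{x_1,x_2\in\DDD_N}1=N^2$.

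There is no real obstacle — the only thing requiring care is the bookkeeping in the swap $\chi(\BBB(y,r),y_i)=\chi(\BBB(y_i,r),y)$ and the two applications of distance-invariance (once when integrating in $y$, once implicitly when treating $v(r)$ as a constant independent of the centre). No analytical tool beyond Fubini and $|A\cap B|+\tfrac12|A\Delta B|=\tfrac12(|A|+|B|)$ is needed.
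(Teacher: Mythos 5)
Your argument is correct and follows essentially the same route as the paper: both reduce the kernel $\lambda(\xi,y_1,y_2)$ to $\int_{\III}\bigl(\mu(\BBB(y_1,r)\cap\BBB(y_2,r))-v(r)^2\bigr)\,\dd\xi(r)$ via the symmetry swap $\chi(\BBB(y,r),x)=\chi(\BBB(x,r),y)$ and distance-invariance, rewrite $\theta^{\Delta}$ through the intersection measure, and observe that the sum is the constant $\int_{\III}\bigl(v(r)-v(r)^2\bigr)\,\dd\xi(r)=\langle\theta^{\Delta}(\xi)\rangle$. The only cosmetic difference is that you identify this constant by integrating the identity against $\dd\mu(y_1)\,\dd\mu(y_2)$, whereas the paper computes $\langle\theta^{\Delta}(\xi)\rangle$ directly; the content is the same.
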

\begin{proof}
	In view of the symmetry
	of the metric $\theta$, we have
	\begin{equation}
		\chi (\BBB(x,r),y)=\chi (\BBB(y,r),x)=\chi_r(\theta(y,x))\, , 
		\label{eq1.100}
	\end{equation}
	where $\chi_r(\cdot)$ is the characteristic function of the segment 
	$[0,r],\, 0\le r\le\pi$.
	Therefore,
	\begin{align*}
		\chi (\BBB(y_1,r&)\Delta \BBB(y_2,r),y)
		\\
		&=
		\chi (\BBB(y_1,r),y)+\chi (\BBB(y_2,r),y)\notag
		-2\chi(\BBB(y_1,r)\cap \BBB(y_2,r),y)\, ,   
	\end{align*}
	and 
	$\int_{\MMM}\chi (\BBB(x,r),y)\dd\mu(x)=\int_{\MMM}\chi 
	(\BBB(x,r),y)\dd\mu(y)=v(r)$.
	
	Using these relations, we obtain
	\begin{equation}\label{eq1.6*}
		\left.
		\begin{aligned}
			\lambda(\xi,x_1,x_2)=&\int_\III
			\Big (\mu  (\BBB(x_1,r)\cap \BBB(x_2,r)) -v(r)^2 \Big)\, \dd\xi 
			(r)\, ,\quad \\
			\theta^{\Delta}(\xi,y_1,y_2) 
			=&\int_{\III} \Big(v(r)-\mu (\BBB(y_1,r)\cap \BBB(y_2,r))\Big)\, 
			\dd \xi (r)\, 
			, \\
			\langle \theta^{\Delta}(\xi) \rangle  
			=&\int_{\III}\Big(v(r)-v(r)^2\Big)\, \dd\xi (r)\, .
		\end{aligned}
		\right\}
	\end{equation}
The relations \eqref{eq1.6*} imply  \eqref{eq1.30}.
\end{proof}	

In the case of spheres $S^d$,  relations of the type 
\eqref{eq1.30} and 
\eqref{eq1.31} 
were given in \cite{33}. Their extensions to more general
metric measure spaces are given \cite[Eq.~(1.30)]{31} and 
\cite[Proposition~1.1]{0}. A probabilistic version of the invariance principle 
\eqref{eq1.31} is given \cite[Theorem~2.1]{30}.

Notice that 
\begin{equation}
\chi(\BBB(y_1,r)\Delta \BBB(y_2,r),y) = 
\vert\chi (\BBB(y_1,r),y)-\chi(\BBB(y_2,r),y)\vert\, .
\label{eq1.15}
\end{equation}
Therefore,
\begin{equation}
\theta^{\Delta} (\xi, y_1,y_2)=
\frac 12\int_\III\int_{\MMM}\vert\chi (\BBB(y_1,r),y)-\chi(\BBB(y_2,r),y)\vert
\,\dd\mu(y)\,\dd\xi(r)
\label{eq1.15*}
\end{equation}
is an $L_1$-metric. 

Recall that a metric space $\MMM$ with a metric $\rho$ is called isometrically
$L_q$-embeddable ($q=1\,\, \mbox {or}\,\, 2$), if there exists 
a mapping  $\varphi:\MMM\ni x\to \varphi(x)\in L_q$, such that
$\rho(x_1,x_2)=\Vert\varphi(x_1)-\varphi(x_2)\Vert_{L_q}$ for all $x_1$, 
$x_2\in \M$. Notice that the  $L_2$-embeddability 
is stronger and implies the $L_1$-embeddability, see~\cite[Sec.~6.3]{17}.

It follows from \eqref{eq1.15*} that the space $\MMM$ with the symmetric 
difference metrics 
$\theta^{\Delta} (\xi)$ is isometrically 
$L_1$-embeddable by the formula
\begin{equation}
\MMM \ni x\to \chi(\BBB(x,r),y)\in L_1(\MMM\times\III)\, ,
\label{eq1.150}
\end{equation}

The identity \eqref{eq1.31} can be called the $L_1$-invariance principle, 
while Stolarsky's invariance principle \eqref{eq1.3**} should be called 
the $L_2$-invariance principle, because it involves the Euclidean metric.
The identities of such a type including correspondingly $L_1$ and $L_2$
metrics could be also called weak and strong invariance principles.
\\

{\it 1.3 $L_2$-invariance principles.}
Recall the definition and necessary  facts on two-point homogeneous spaces.
Let $G=G(\MMM)$ be the group of isometries of a metric space $\MMM$ with 
a metric $\theta$, {\it i.e.} $\theta(gx_1,gx_2)=\theta(x_1,x_2)$ for all 
$x_1$, $x_2\in \MMM$ and $g\in G$. The space $\MMM$ is called {\it 
	two-point homogeneous}, if for any two pairs of points $x_1$, $x_2$ and 
$y_1$, $y_2$ with $\theta(x_1,x_2)=\theta(y_1,y_2)$ there exists an isometry 
$g\in G$, such that $y_1=gx_1$, $y_2=gx_2$. In this case, the group $G$ is 
obviously
transitive on $\MMM$ and $\MMM=G/H$ is a homogeneous space, where
the subgroup $K\subset G$ is the stabilizer of a point $x_0\in \MMM$. 
Furthermore, the homogeneous space $\MMM$ is symmetric, {\it i.e.} for any two 
points 
$y_1$, $y_2\in \MMM$ there exists an isometry $g\in G$, such that $gy_1=y_2$, 
$gy_2=y_1$.

There is a large number of two-point homogeneous spaces.
For example, all Hamming spaces, known in the coding theory, are two-point 
homogeneous. We will consider connected spaces.
This assumption turns out to be a strong restriction. All compact connected 
two-point homogeneous spaces $\QQQ=G/H$ are 
known. By Wang's theorem they are the following, see \cite{21, 22, 
19**, 36, 37}:

(i) The $d$-dimensional Euclidean spheres 
$S^d=SO(d+1)/SO(d)\times 
\{1\}$, $d\ge 2$, and $S^1=O(2)/O(1) \times \{ 1\}$. 

(ii) The real projective spaces $\Rr P^n=O(n+1)/O(n)\times O(1)$.

(iii) The complex projective spaces $\Cc P^n=U(n+1)/U(n)\times U(1)$.

(iv) The quaternionic projective spaces $\Hh P^n=Sp(n+1)/Sp(n)\times Sp(1)$,

(v) The octonionic projective plane $\Oo P^2=F_4/\Spin (9)$.

Here we use the standard notation from the theory of Lie groups; in 
particular,  
$F_4$ is one of the exceptional Lie groups in Cartan's classification. 

All these spaces are Riemannian symmetric manifolds of rank one. 
Geometrically, this means that all geodesic sub-manifolds in $\QQQ$ are 
one-dimensional and coincide with geodesics. From the spectral stand point, 
this also means that all 
operators on $\QQQ$ commuting with the action of the group $G$ are functions of 
the Laplace--Beltrami operator on $\QQQ$, see \cite{21, 22, 36, 37} for more 
details.

The spaces $ \Ff P^n $ as Riemannian manifolds have dimensions 
\begin{equation}
d=\dim_{\Rr} \Ff P^n=nd_0, \quad d_0=\dim_{\Rr}\Ff,
\label{eq2.1} 
\end{equation}
where $d_0=1,2,4,8$ for $\Ff=\Rr$, $\Cc$, $\Hh$, $\Oo$, correspondingly.

For the spheres $S^d$ we put $d_0=d$ by definition. Projective spaces 
of dimension  $d_0$ ($n=1$) are homeomorphic to the spheres $S^{d_0}$:
$\Rr P^1\, \approx S^1, \Cc P^1\, \approx S^2,  \Hh P^1\,\approx S^4, 
\Oo P^1\, \approx S^8$.
We can conveniently agree that $d>d_0$ ($n\ge 2)$ for projective spaces,
while the equality $d=d_0$ holds only for spheres. Under this convention,
the dimensions $d=nd_0$ and $d_0$ define uniquely (up to homeomorphism) 
the corresponding homogeneous space which we denote by $\QQQ=\QQQ(d,d_0)$.
In what follows we always assume that $n=2$ if $\Ff=\Oo$, since  
projective spaces $\Oo P^n$ over octonions do not exist for $n>2$. 

We consider $\QQQ$ as a metric measure space with the metric $\theta$
and measure $\mu$ proportional to the invariant Riemannian distance and measure
on $\QQQ$. The coefficients of proportionality are defined to satisfy 
\eqref{eq1.1}.

Any space $\QQQ$ is distance-invariant and the volume of balls in the space 
is given by 
\begin{align}
v(r)&=\kappa
\int^r_0(\sin\frac{1}{2}u)^{d-1}(\cos \frac{1}{2}u)^{d_0-1}\,\dd u 
\quad r\in [\,0,\pi\,]
\notag
\\
&=\kappa \, 2^{1-d/2 -d_0/2}\int^1_{\cos r} 
(1-t)^{\frac{d}{2}-1}\,(1+t)^{\frac{d_0}{2}-1}\,\dd t,
\label{eq2.2}
\end{align}
where
$\kappa=\kappa(d,d_0)=B(d/2,d_0/2)^{-1}$;
$B(a,b)=\Gamma(a)\Gamma(b)/\Gamma(a+b)$ and 
$\Gamma(a)$ are the beta and gamma functions. 
Equivalent forms of  \eqref{eq2.2}
can be found in the literature, see, for example, \cite[pp.~177--178]{19}, 
\cite[pp.~165--168]{22},  \cite[pp.~508--510]{24}.
For even $d_0$, the integrals \eqref{eq2.2} can be calculated explicitly that 
gives convenient expressions for $v(r)$ in the case of 
$\Cc P^n,\, \Hh P^n$ and $\Oo P^2$, see, for example,  \cite{19**}. 

The chordal metric on the spaces $\QQQ$ is defined by
\begin{equation}
\tau(x_1,x_2)=\sin \frac{1}{2}\theta(x_1,x_2)\, , \quad x_1,x_2\in \QQQ.
\label{eq2.4}
\end{equation}
The formula \eqref{eq2.4} defines a metric because the function 
$\varphi(\theta)=\sin \theta/2$, $0\le \theta\le \pi$, 
is concave, increasing, and  $\varphi(0)=0$, that implies the triangle 
inequality, see \cite[Lemma~9.0.2]{17}.
For the sphere $S^d$ we have
\begin{equation}
\tau(x_1,x_2)=\sin \frac12\theta(x_1,x_2)=\frac{1}{2}\,\Vert x_1-x_2\Vert,\, 
\quad x_1,x_2\in S^d.
\label{eq2.6*}
\end{equation}
\begin{lemma}\label{lm3.1}
The space $\QQQ=\QQQ(d,d_0), \, d=nd_0,$ can be embedded into the unit sphere
\begin{equation}
\Pi:\QQQ\ni x\to \Pi(x)\in S^{m-1}\subset \Rr^m, \quad 
m=\frac{1}{2}(n+1)(d+2)-1,
\label{eq2.6}
\end{equation}
such that
\begin{equation}
\tau(x_1,x_2)=\left(\frac{d}{2(d+d_0)}\right)^{1/2}\|\Pi(x_1)-\Pi(x_2)\|, \quad 
x_1,x_2\in \QQQ,
\label{eq2.7}
\end{equation}
where $\|\cdot \|$ is the Euclidean norm in $\Rr^{m}$.
\end{lemma}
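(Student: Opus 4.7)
The plan is to realize $\Pi$ via the standard rank-one projector model of the projective spaces. Identify each point $x \in \QQQ = \Ff P^n$ with the self-adjoint rank-one projector $P(x)$ onto the corresponding line in $\Ff^{n+1}$, which satisfies $P(x)^2 = P(x)$ and $\Tr P(x) = 1$. These projectors sit inside the real Jordan algebra $\HHH_{n+1}(\Ff)$ of $(n+1)\times(n+1)$ Hermitian matrices over $\Ff$, equipped with the Hilbert--Schmidt inner product $\langle A, B\rangle = \RE \Tr(AB)$. The real dimension of $\HHH_{n+1}(\Ff)$ is $(n+1) + \binom{n+1}{2} d_0 = \tfrac12 (n+1)(d+2)$, and for the exceptional case $\Oo P^2$ one works inside the Albert algebra.

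I would first subtract the common barycenter $\tfrac{1}{n+1} I$ so that the images lie in the trace-zero hyperplane of real dimension $m = \tfrac12 (n+1)(d+2) - 1$. Using $P^2 = P$ and $\Tr P = 1$, one computes
\[
\bigl\| P(x) - \tfrac{1}{n+1} I \bigr\|_{HS}^2 = 1 - \tfrac{1}{n+1} = \tfrac{n}{n+1},
\]
so all images automatically sit on a sphere of radius $\sqrt{n/(n+1)}$; rescaling by $\sqrt{(n+1)/n}$ produces a map into the unit sphere $S^{m-1} \subset \Rr^m$.

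The heart of the argument is the projector-product identity
\[
\Tr\bigl( P(x_1) P(x_2) \bigr) = \cos^2 \tfrac12 \theta(x_1, x_2),
\]
valid for the geodesic metric $\theta$ normalized by $\diam(\QQQ) = \pi$. Granting this,
\[
\| P(x_1) - P(x_2) \|_{HS}^2 = 2 - 2\Tr(P_1 P_2) = 2\sin^2 \tfrac12 \theta(x_1,x_2) = 2\tau(x_1, x_2)^2,
\]
and after rescaling
\[
\|\Pi(x_1) - \Pi(x_2)\|^2 = \tfrac{n+1}{n}\cdot 2\tau(x_1, x_2)^2 = \tfrac{2(d+d_0)}{d}\, \tau(x_1, x_2)^2,
\]
using $d = n d_0$, which is exactly \eqref{eq2.7}. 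In the sphere case $n=1$, $d_0 = d$, one has $m = d+1$ and $S^{m-1} = S^d$, and $\Pi$ reduces to the identity embedding, compatible with \eqref{eq2.6*}.

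The main obstacle is the uniform verification of the projector-product identity under the normalization $\diam(\QQQ)=\pi$ across all four families. For $\Ff = \Rr, \Cc, \Hh$ it is the familiar Fubini--Study computation $\Tr(P_1 P_2) = |\langle v_1, v_2\rangle|^2$ for unit vectors $v_1, v_2$ representing the lines; for $\Oo P^2$ the same identity must be extracted from the Jordan product and trace form of the Albert algebra, since matrix multiplication is no longer associative. The remaining steps are then routine linear algebra.
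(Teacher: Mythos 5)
Your proposal is correct in substance, but it takes a genuinely different route from the paper. You realize the embedding through the classical rank-one-projector (Jordan algebra) model of $\Ff P^n$: centre the projectors at $\tfrac{1}{n+1}I$, land in the trace-zero hyperplane of $\HHH_{n+1}(\Ff)$ of real dimension $\tfrac12(n+1)(d+2)-1=m$, and convert $\Tr(P_1P_2)=\cos^2\tfrac12\theta$ into \eqref{eq2.7}; your dimension count and the identity $\tfrac{n+1}{n}=\tfrac{d+d_0}{d}$ both check out. The paper instead takes $\Pi=\Pi_1$, the embedding \eqref{eqC7} built from an orthonormal basis of the first nontrivial irreducible subspace $V_1\subset L_2(\QQQ)$: the target dimension $m_1=\tfrac{d(d+d_0+2)}{2d_0}=\tfrac12(n+1)(d+2)-1$ comes from the general dimension formula \eqref{eq8.27*}, and \eqref{eq2.7} follows by comparing $\|\Pi_1(x_1)-\Pi_1(x_2)\|^2=2(1-\phi_1)$ with the explicit degree-one Jacobi polynomial, which gives $\tau^2=\tfrac{d}{d+d_0}(1-\phi_1)$. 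The two embeddings are essentially the same map (the trace-zero part of the Jordan algebra is a model of $V_1$), but the derivations differ in what they presuppose: your argument hinges on the projector-product identity $\Tr(P(x_1)P(x_2))=\cos^2\tfrac12\theta(x_1,x_2)$, which you correctly flag as the main obstacle --- it is the standard Fubini--Study computation for $\Ff=\Rr,\Cc,\Hh$ but requires the Albert algebra's trace form for $\Oo P^2$, and this geometric input is exactly what the present paper (as opposed to \cite{0}, where your approach is carried out) is designed to avoid. The spherical-function route buys uniformity across all five families, including $\Oo P^2$ and the spheres $S^d$ of arbitrary dimension (which are not of the form $\Ff P^1$ for general $d$ and which your model covers only by a separate ad hoc remark), at the price of invoking the representation-theoretic machinery of Section~2. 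If you want your version to stand alone, you must still supply a proof of the projector-product identity under the normalization $\diam(\QQQ,\theta)=\pi$, including the octonionic case.
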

Hence, the metric $\tau(x_1,x_2)$ is proportional to the Euclidean length of a 
segment
joining the corresponding points $\Pi(x_1)$ and $\Pi(x_2)$ on the unit sphere. 
The chordal metric $\tau$ on the complex projective space $\Cc P^n$ 
is known as the Fubini--Study metric. An interesting discussion of the 
properties of chordal metrics for projective spaces can be 
found in the paper \cite{14, 15}.

Lemma~1.1 will be proved in Section~2, and the embedding \eqref{eq2.6} will 
be described explicitly in terms of spherical functions on the space 
$\QQQ$. Note that the embedding \eqref{eq2.6} can be described in different 
ways, see, for example, \cite{0, 34*}.

The following general result has been established in \cite[Theorems~1.1 
and~1.2]{0}.
\begin{theorem}\label{thm3.1}
	For each space $\QQQ=\QQQ(d,d_0),\, d=nd_0$, we have the equality 
	\begin{equation}
	\tau(x_1,x_2)=\gamma(\QQQ)\,\, \theta^{\Delta}(\xi^{\natural} ,x_1,x_2). 
	\label{eq2.8}
	\end{equation}
	where $\dd\xi^{\natural}(r)=\sin r\,\dd r$, $r\in [0,\pi]$ and
	\begin{equation}
	\gamma(\QQQ)
	=\frac{\sqrt{\pi}}{4}\,(d+d_0)\,
	\frac{\Gamma(d_0/2)}{\Gamma((d_0+1)/2)}
	=\frac{n+1}{2}\,\gamma(S^{d_0})\, ,
	\label{eq1.33*}
	\end{equation}
	where $\gamma(S^{d_0})$ is defined by \eqref{eq1.3***}. Therefore, 
	\begin{equation*}
		\left.
		\begin{aligned}
			& \gamma (\Rr P^n)  = \frac{n+1}{2}\, \gamma (S^{1}) = 
			\frac{\pi}{4} \, 
			(n+1)\, ,
			\\
			& \gamma (\Cc P^n)=  \frac{n+1}{2}\, \gamma (S^{2}) = n+1\, ,
			\\
			& \gamma (\Hh P^n) =  \frac{n+1}{2}\, \gamma (S^{4}) = 
			\frac{4}{3}\, 
			(n+1)\, ,
			\\
			& \gamma (\Oo P^2) = \,\,\frac{3}{2}\, \gamma (S^{8}) = 
			\frac{192}{35}\, .
		\end{aligned}
		\label{eq1.34}
		\right\}
	\end{equation*}
	\end{theorem}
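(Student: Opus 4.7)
\textit{Reduction to a first-moment integral.} Both sides of~\eqref{eq2.8} depend only on $\theta := \theta(x_1, x_2)$ by distance-invariance, so the identity is a statement about two functions of $\theta\in[0,\pi]$. Starting from the second line of~\eqref{eq1.6*} with $\xi = \xi^{\natural}$ and interchanging the order of integration as in the proof of Proposition~\ref{prop1.1}, the inner $r$-integral evaluates, for each fixed $y$, to
\[
\int_0^\pi |\chi_r(\theta(y, x_1)) - \chi_r(\theta(y, x_2))|\,\sin r\,\dd r = |\cos\theta(y, x_1) - \cos\theta(y, x_2)|,
\]
so
\[
\theta^{\Delta}(\xi^{\natural}, x_1, x_2) = \tfrac{1}{2}\int_{\QQQ} |\cos\theta(y, x_1) - \cos\theta(y, x_2)|\,\dd\mu(y).
\]
Squaring~\eqref{eq2.7} and using $\sin^2(\theta/2) = (1-\cos\theta)/2$ together with $\|\Pi(x)-\Pi(x')\|^2 = 2 - 2(\Pi(x),\Pi(x'))$ yields the affine relation
\[
\cos\theta(x, x') = \frac{d_0 - d}{d + d_0} + \frac{2d}{d + d_0}(\Pi(x), \Pi(x')),
\]
so, writing $v := \Pi(x_1) - \Pi(x_2)$,
\[
\theta^{\Delta}(\xi^{\natural}, x_1, x_2) = \frac{d}{d + d_0}\int_{\QQQ} |(\Pi(y), v)|\,\dd\mu(y).
\]

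\textit{The core identity via spherical functions.} The remaining task is to prove
\[
\int_{\QQQ} |(\Pi(y), v)|\,\dd\mu(y) = c(\QQQ)\,\|v\|\qquad \text{for every chord } v = \Pi(x_1) - \Pi(x_2),
\]
and to compute the constant $c(\QQQ)$. For $\QQQ = S^d$ this is immediate from $SO(d+1)$-invariance and recovers~\eqref{eq1.3***}; for general $\QQQ$, the components of $\Pi$ span the first nontrivial eigenspace $H_1(\QQQ)$ of the Laplace--Beltrami operator, and the image of $G$ in $SO(m)$ is too small a subgroup to yield the result by ambient symmetry alone. I would therefore expand both sides of~\eqref{eq2.8}, viewed as functions of $t = \cos\theta\in[-1,1]$, in the zonal spherical basis on $\QQQ$ --- which consists of the Jacobi polynomials $P_k^{(\alpha,\beta)}(t)$ with $\alpha = d/2 - 1$, $\beta = d_0/2 - 1$ --- and verify the identity coefficient-by-coefficient against the weight $(1-t)^\alpha(1+t)^\beta\,\dd t$. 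The Jacobi coefficients of $\sin(\theta/2) = \sqrt{(1-t)/2}$ come from the classical expansion of $(1-t)^{1/2}$ and involve only gamma-function ratios. The Jacobi coefficients of $F(\theta) := \theta^{\Delta}(\xi^{\natural}, x_1, x_2)$ are obtained from the first line of~\eqref{eq1.6*} by (i) expanding $\chi_{\BBB(y_0, r)}$ zonally about $y_0$, (ii) applying Parseval's identity to express the intersection volume $\mu(\BBB(x_1,r)\cap\BBB(x_2,r))$ as a series $\sum_k \hat{u}_k(r)^2 Z_k(\cos\theta)$, and (iii) integrating each term against $\sin r\,\dd r$.

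\textit{Main obstacle.} The principal hurdle is the gamma-function arithmetic that matches the two Jacobi series coefficient-by-coefficient and extracts the explicit constant. The factorization $\gamma(\QQQ) = \frac{n+1}{2}\gamma(S^{d_0})$ in~\eqref{eq1.33*} strongly suggests that after standard manipulations (such as the Legendre duplication formula), the computation collapses to the one already known on the $d_0$-sphere, cleanly reflecting the rank-one structure of $\QQQ$.
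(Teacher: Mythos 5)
Your reduction is correct as far as it goes: the evaluation $\int_0^\pi|\chi_r(\theta_1)-\chi_r(\theta_2)|\sin r\,\dd r=|\cos\theta_1-\cos\theta_2|$ and the affine relation between $\cos\theta$ and $(\Pi(x),\Pi(x'))$ both check out, and your fallback strategy --- expand $\sqrt{(1-t)/2}$ and $\theta^{\Delta}(\xi^{\natural})$ in the zonal basis $P_l^{(d/2-1,\,d_0/2-1)}$ and compare coefficients --- is exactly the route the paper takes (Lemma~2.1 and the proof of Theorem~1.2). But the proposal stops where the actual work begins. After the Parseval step, the $l$-th coefficient of $\theta^{\Delta}(\xi^{\natural})$ is proportional to
\begin{equation*}
A_l(\xi^{\natural})=2^{-d-d_0}\int_{-1}^{1}\Bigl(P^{(d/2,\,d_0/2)}_{l-1}(t)\Bigr)^2(1-t)^{d}(1+t)^{d_0}\,\dd t ,
\end{equation*}
a weighted $L^2$-norm of a Jacobi polynomial against the weight with \emph{doubled} exponents, not the orthogonality weight. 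Matching this against the classical Jacobi coefficients of $\sqrt{(1-t)/2}$ for every $l\ge 1$ is equivalent to the family of identities \eqref{eq1.33}, and these are not routine ``gamma-function arithmetic'': no closed form for such integrals is available off the shelf (the paper remarks that it could not locate one in the literature), and their proof occupies all of Section~3, proceeding via Rodrigues' formula, $l$-fold integration by parts, a Leibniz expansion, and finally Watson's theorem on ${}_3F_2(1)$, applied on a suitable open subset of complex $(\alpha,\beta)$ and extended by analytic continuation. This is the missing idea; without it the argument is a correct setup but not a proof.

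A secondary point: your hope that the computation ``collapses to the one already known on the $d_0$-sphere'' is not how the constant arises; the factorization $\gamma(\QQQ)=\tfrac{n+1}{2}\,\gamma(S^{d_0})$ emerges only a posteriori from the closed form $T_l(d/2,d_0/2)$ via the duplication formula, not from a reduction of the $\QQQ$-integral to a sphere integral. Your first-moment identity $\theta^{\Delta}(\xi^{\natural},x_1,x_2)=\tfrac{d}{d+d_0}\int_{\QQQ}|(\Pi(y),v)|\,\dd\mu(y)$ is a pleasant reformulation, closer in spirit to the geometric proof in \cite{0}, but as you yourself observe it cannot be closed by ambient $SO(m)$-symmetry, and you do not in fact use it in the sequel.
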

Comparing Theorem~1.1 with Proposition~1.1, we arrive to the following.	
	\begin{corollary}\label{cor}
	We have the following $L_2$-invariance principle
	\begin{equation}
		\gamma(\QQQ)\,\lambda [\xi^{\natural},\DDD_N]+\tau [\DDD_N]=\langle 
		\tau\rangle 
		N^2
		\label{eq2.10}
	\end{equation}
for an arbitrary $N$-point subset $\DDD_N\subset \QQQ$.
\end{corollary}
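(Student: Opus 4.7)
The plan is to derive the corollary by combining Proposition~1.1 with Theorem~1.1; the work has already been front-loaded into those two results, so what remains is essentially an algebraic manipulation.

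First I would apply Proposition~1.1 with the specific measure $\xi = \xi^{\natural}$, i.e.\ $\dd\xi^{\natural}(r)=\sin r\,\dd r$ on $\III=[0,\pi]$. Since every $\QQQ = \QQQ(d,d_0)$ is distance-invariant (the volume $v(r)$ of a ball depends only on $r$, as recorded in \eqref{eq2.2}), Proposition~1.1 is directly applicable and yields
\begin{equation*}
\lambda[\xi^{\natural},\DDD_N] + \theta^{\Delta}[\xi^{\natural},\DDD_N] = \langle \theta^{\Delta}(\xi^{\natural}) \rangle\, N^2.
\end{equation*}
This is the $L_1$-invariance principle specialized to our measure of radii; at this stage $\tau$ has not yet entered the picture.

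Next I would multiply this identity through by the constant $\gamma(\QQQ)$ from \eqref{eq1.33*} and invoke Theorem~1.1, which asserts pointwise on $\QQQ\times\QQQ$ that $\tau(x_1,x_2) = \gamma(\QQQ)\,\theta^{\Delta}(\xi^{\natural},x_1,x_2)$. Summing this identity over pairs $(x_1,x_2)\in \DDD_N\times\DDD_N$ converts $\gamma(\QQQ)\,\theta^{\Delta}[\xi^{\natural},\DDD_N]$ into $\tau[\DDD_N]$ (using the definition \eqref{eq1.10}), and integrating it over $\MMM\times\MMM$ with respect to $\dd\mu(y_1)\,\dd\mu(y_2)$ converts $\gamma(\QQQ)\,\langle \theta^{\Delta}(\xi^{\natural})\rangle$ into $\langle \tau \rangle$ (using \eqref{eq1.12}). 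Substituting both of these into the multiplied identity produces exactly \eqref{eq2.10}.

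There is really no obstacle here that is not already paid for in Theorem~1.1: the only thing to be careful about is to state that both the diagonal terms ($x_1=x_2$) and the off-diagonal terms are handled consistently by the pointwise identity of Theorem~1.1, and that the $L_1$-invariance identity from Proposition~1.1 is invoked for the single measure $\xi^{\natural}$ rather than for a general $\xi$. Thus the proof is a one-line derivation once the deeper identity \eqref{eq2.8} between the chordal metric $\tau$ and the symmetric difference metric $\theta^{\Delta}(\xi^{\natural})$ is in hand — the content of the corollary is entirely contained in the transition from the weak ($L_1$) to the strong ($L_2$) invariance principle, which Theorem~1.1 supplies.
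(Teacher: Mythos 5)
Your proposal is correct and is exactly the paper's argument: the paper derives the corollary by "comparing Theorem~1.1 with Proposition~1.1," i.e.\ by specializing the $L_1$-invariance principle \eqref{eq1.31} to $\xi=\xi^{\natural}$ and then using the pointwise identity \eqref{eq2.8} to replace $\gamma(\QQQ)\,\theta^{\Delta}[\xi^{\natural},\DDD_N]$ by $\tau[\DDD_N]$ and $\gamma(\QQQ)\,\langle\theta^{\Delta}(\xi^{\natural})\rangle$ by $\langle\tau\rangle$. No gaps.
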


The constant $\gamma(\QQQ)$ has the following geometric 
interpretation
\begin{equation}
\gamma(Q)=\frac{\langle \tau\rangle}{\langle 
	\theta^{\Delta}(\xi^{\natural})\rangle}=\frac{\diam 
	(\QQQ,\tau)}{\diam(\QQQ,\,\theta^{\Delta}(\xi^{\natural}))}\, .
\label{eq2.9}
\end{equation}
Indeed, it suffices to calculate the average values \eqref{eq1.12} of both 
metrics 
in \eqref{eq2.8} to obtain 
the first equality in \eqref{eq2.9}. Similarly, writing \eqref{eq2.8} for any 
pair of antipodal points $x_1$, $x_2$, $\theta(x_1,x_2) = \pi$, we obtain the 
second 
equality in \eqref{eq2.9}. 
The average value $\langle \tau \rangle$ of the chordal metric $\tau$ can be 
easily calculated with the help of the formulas \eqref{eq1.12} and 
\eqref{eq2.2}:
\begin{equation}
	\langle \tau\rangle 
	= B(d/2, d_0 /2)^{-1}\, B((d+1)/2, d_0 /2)\, .
	\label{eq0aaa}
\end{equation}

In the case of spheres $S^d$,
the identity \eqref{eq2.10} coincides with \eqref{eq1.3**}. 
The identity \eqref{eq2.10} can be thought of as an extension of
Stolarsky's invariance principle to all projective spaces.

Applications of $L_1$- and $L_2$-invariance principles and similar identities 
to the discrepancy theory, geometry of distances, information and probability 
theory 
have been given in many papers, see, for example, 
\cite{2, 3, 3*, 5, 6, 8, 8a, 8b, 11, 30, 31, 0, 33}.


The above discussion raises the following {\it open questions}: 

- Are there measures $\xi$ on the set of radii for spaces $Q$ 
(for spheres $S^d$, say) 
other than the measure $\xi^{\natural}$ such that the 
corresponding symmetric difference metrics $\theta^{\Delta}(\xi)$ are the 
$L_2$-metrics?

- Are there compact measure metric spaces other than spheres $S^d$ and 
projective spaces $\Ff P^n$ for which the $L_2$-invariance principle is also 
true?
\\

{\it 1.4 Proof of Theorem~1.2.}
In the present paper we use the theory of 
spherical functions to prove the following result.
\begin{theorem}\label{prop3.1}
	The equality \eqref{eq2.8} is equivalent to
	the following series of formulas for Jacobi polynomials
	\begin{align}
		\int^1_{-1}
		\left(P^{(d/2, d_0/2)}_{l}(t)\right)^2& \, \left(1-t\right)^{d}
		\left(1+t\right)^{d_0}\, \dd t
		\notag
		\\ 
		= &\,\,\frac{2^{d+d_0+1}\, (1/2)_{l-1}}{(l!)^2}\,B(d+1, d_0 +1)\, 
		T_{l}(d/2, d_0/2)\,
		\label{eq1.33} 
	\end{align}
	for all $l\ge 0$, where
	\begin{align}
		T_{l}(d/2, d_0/2)
		=\frac{\Gamma(d/2+l+1)\,\Gamma(d_0/2+l+1)\,\Gamma(d/2+d_0/2+3/2))}
		{\Gamma(d/2+1)\,\Gamma(d_0/2+1)\,\Gamma(d/2+d_0/2+3/2+l)} \, .
		\label{eq1.34}
	\end{align} 
\end{theorem}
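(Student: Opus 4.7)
The plan is to expand both sides of \eqref{eq2.8} in the basis of zonal spherical functions on $\QQQ=\QQQ(d,d_0)$ and reduce \eqref{eq2.8} to an equality of Fourier coefficients indexed by $l\ge 0$. Since $\QQQ$ is a compact symmetric space of rank one, the zonal spherical functions are the Jacobi polynomials $P_l^{(d/2-1,\,d_0/2-1)}(\cos\theta)$, $l\ge 0$, orthogonal with respect to the weight $(1-t)^{d/2-1}(1+t)^{d_0/2-1}\,\dd t$ on $[-1,1]$ read off from the radial volume formula \eqref{eq2.2}. Both $\tau(x_1,x_2)$ and $\theta^{\Delta}(\xi^{\natural},x_1,x_2)$ depend only on $\theta(x_1,x_2)$ and therefore admit unique expansions in this basis; the identity \eqref{eq2.8} holds if and only if these expansions match coefficient by coefficient, and the claim is that this matching \emph{is} precisely \eqref{eq1.33}.

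To handle the right-hand side of \eqref{eq2.8}, I would apply the addition theorem for zonal spherical functions to decompose $\chi(\BBB(y,r),x)-v(r)$ into its $l$-th harmonic components for each $l\ge 1$. The $l$-th component is proportional to $\beta_l(r)\,P_l^{(d/2-1,\,d_0/2-1)}(\cos\theta(x,y))$, where
\begin{equation*}
\beta_l(r) = \int_{\cos r}^1 P_l^{(d/2-1,\,d_0/2-1)}(s)\,(1-s)^{d/2-1}(1+s)^{d_0/2-1}\,\dd s.
\end{equation*}
Invoking the Rodrigues formula for Jacobi polynomials and a single integration by parts yields the key shift
\begin{equation*}
\beta_l(r) = \frac{1}{2l}\,(1-\cos r)^{d/2}(1+\cos r)^{d_0/2}\,P_{l-1}^{(d/2,\,d_0/2)}(\cos r).
\end{equation*}
Substituting into the kernel formula \eqref{eq1.6}, using \eqref{eq1.30} to pass from $\lambda(\xi^{\natural},\cdot,\cdot)$ to $\theta^{\Delta}(\xi^{\natural},\cdot,\cdot)$, and changing variables $t=\cos r$, the $l$-th Jacobi coefficient of $\theta^{\Delta}(\xi^{\natural},x_1,x_2)$ becomes a universal constant times
\begin{equation*}
\int_{-1}^1 \bigl(P_{l-1}^{(d/2,\,d_0/2)}(t)\bigr)^{2}\,(1-t)^{d}(1+t)^{d_0}\,\dd t,
\end{equation*}
which is exactly the integral on the left-hand side of \eqref{eq1.33} (up to the index shift $l\to l+1$). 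This also explains the unusual pairing of parameters $(d/2,d_0/2)$ with the squared Jacobian weight $(1-t)^{d}(1+t)^{d_0}$.

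On the left of \eqref{eq2.8}, the chordal metric $\tau(x_1,x_2)=\sqrt{(1-\cos\theta)/2}$ has $l$-th Jacobi coefficient proportional to
\begin{equation*}
\int_{-1}^1 P_l^{(d/2-1,\,d_0/2-1)}(t)\,(1-t)^{d/2-1/2}(1+t)^{d_0/2-1}\,\dd t,
\end{equation*}
a classical shifted-Jacobi integral that evaluates in closed form as a ratio of Gamma functions, either via one Rodrigues step or via Euler's ${}_2F_1$ beta integral. Equating the $l$-th Jacobi coefficients on the two sides of \eqref{eq2.8} against the constant $\gamma(\QQQ)$ from \eqref{eq1.33*}, and solving for the integral computed in the previous paragraph, produces an explicit product of Gamma functions; regrouping it gives the right-hand side of \eqref{eq1.33}.

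The main obstacle is this final bookkeeping: combining the normalizations (Pochhammer factors from Rodrigues, the dimensions of the spherical harmonic spaces, the orthogonality norms $\|P_l^{(d/2-1,\,d_0/2-1)}\|_{L^2}^2$, and $\gamma(\QQQ)$) and then condensing the output via the Legendre duplication formula $\Gamma(2z)=\pi^{-1/2}2^{2z-1}\Gamma(z)\Gamma(z+1/2)$, which absorbs the half-integer shifts introduced by the $\sqrt{(1-t)/2}$ factor and produces the prefactor $(1/2)_{l-1}/(l!)^2$ together with the compact ratio $T_l(d/2,d_0/2)$ in \eqref{eq1.34}. The $l=0$ matching reproduces the formula \eqref{eq1.33*} for $\gamma(\QQQ)$, while the matchings for $l\ge 1$ are exactly \eqref{eq1.33}, so the equivalence is established.
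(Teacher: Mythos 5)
Your proposal follows essentially the same route as the paper: expand both metrics in the zonal spherical functions $P_l^{(d/2-1,d_0/2-1)}(\cos\theta)$, use the Rodrigues-formula shift $\int_{\cos r}^1 P_l^{(d/2-1,d_0/2-1)}(s)(1-s)^{d/2-1}(1+s)^{d_0/2-1}\,\dd s=\tfrac{1}{2l}(1-\cos r)^{d/2}(1+\cos r)^{d_0/2}P_{l-1}^{(d/2,d_0/2)}(\cos r)$ so that the squared kernel produces the weight $(1-t)^d(1+t)^{d_0}$, evaluate the chordal-metric coefficients by the classical shifted-Jacobi beta integral, and match coefficients using the duplication formula — exactly the content of the paper's Lemma 2.1 and Section 2.4. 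The only cosmetic difference is that you package the symmetric-difference coefficients through the discrepancy kernel $\lambda$ and Proposition 1.1 rather than through $\mu(\BBB(y_1,r)\cap\BBB(y_2,r))$ directly, which amounts to the same convolution-square computation.
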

Here $P^{(\alpha,\beta)}_l(t),\, t\in [-1,1],\, \alpha > -1,\, \beta > -1,$ are 
the 
standard Jacobi polynomials of degree $l$ normalized by 
\begin{equation}
	P^{(\alpha,\beta)}_l(1) =
	\binom{\alpha +l}{l} =\frac{\Gamma (\alpha +l+1)}{\Gamma (l+1) 
	\Gamma(\alpha +1)}\, . 
	\label{eq8.23}
\end{equation}
The polynomials  $P^{(\alpha,\beta)}_l$ can be given by Rodrigues' formula 
\begin{equation}
	P^{(\alpha,\beta)}_l(t) =
	\frac{(-1)^l}{2^l l!} (1-t)^{-\alpha} (1+t)^{-\beta}
	\frac{\dd^l}{\dd t^l} 
	\left\{ (1-t)^{n+\alpha} (1+t)^{n+\beta} \right\}.
	\label{eq9.11}
\end{equation} 

Notice that 
$\vert P^{(\alpha,\beta)}_l(t)\vert \le P^{(\alpha,\beta)}_l(1)$ 
for $t\in [-1,1]$. 
Recall that $\{P^{(\alpha,\beta)}_l, l\ge 0\}$ form a complete 
orthogonal system in 
$L_2$ on the segment $[-1,1]$ with the weight $(1-t)^{\alpha}(1+t)^{\beta}$ 
and the following orthogonality relations 
\begin{align}
	& \int\limits^{\pi}_{0}P^{(\alpha,\beta)}_l (\cos u)
	P^{(\alpha,\beta)}_{l'}(\cos u) (\sin \frac12 u)^{2\alpha +1} 
	(\cos \frac 12 u)^{2\beta +1}\, du
	\\
	& = 
	2^{-\alpha-\beta-1} \int\limits^1_{-1}
	P^{(\alpha,\beta)}_l(t) P^{(\alpha,\beta)}_{l'} (t)
	(1-t)^{\alpha}(1+t)^{\beta} \, dt =M_l(\alpha,\beta)^{-1} \delta_{ll'}, 
	\label{eq8.25}
\end{align}                                               
where $\delta_{ll'}$ is Kronecker's symbol, $M_0=B(\alpha +1,\beta +1)^{-1}$ 
and 
\begin{equation}
	M_l(\alpha,\beta)=(2l+\alpha+\beta+1) 
	\frac{\Gamma (l+1)\Gamma (l+\alpha+\beta+1)}{\Gamma(l+\alpha+1)
		\Gamma (l+\beta+1)}, 
	\quad l\ge 1.
	\label{eq8.26**}
\end{equation}

All necessary facts about Jacobi polynomials
can be found in \cite{1*, 34}.
We also use the notation
\begin{equation}
	(a)_0 = 1,\quad (a)_k = a(a+1)\dots (a+k-1)=\frac{\Gamma(a +k)}{\Gamma(a)}
	\label{eq8.26*}
\end{equation}
for the rising factorial powers and
\begin{equation}
	\langle a \rangle_0 = 1,\quad \langle a \rangle_k = a(a-1)\dots 
	(a-k+1)=(-1)^k\,(-a)_k\,  
	\label{eq8.26***}
\end{equation}
for the falling factorial powers. 

Theorem~1.2 reduces the proof of Theorem~1.1 to the proof of the 
formulas \eqref{eq1.33}. Perhaps such formulas are known but I 
could not find them in the literature. Not much is known about
the integrals 
$\int^1_{-1}\left(P^{(\alpha, \beta)}_{l}(t)\right)^2 
(1-t)^{\nu}
(1+t)^{\sigma}\, \dd t $ for general Jacobi polynomials $P^{(\alpha, \beta)}$.
Only for spheres $S^d$ Jacobi polynomials $P^{(d/2, d/2)}_{l}$ 
coincide (up to constant factors) with 
Gegenbauer polynomials, and in this case very general formulas for such 
integrals
are given in \cite{11*}.

In the present paper we will prove the following statement.
\begin{lemma}\label{lem3.1}
	For all $l\ge 0$, $\Re\alpha >-1/2$ and $\,\Re\beta >-1/2$, we have
	\begin{align}
		\int^1_{-1}\left(P^{(\alpha, \beta)}_{l}(t)\right)^2& 
		(1-t)^{2\alpha}
		(1+t)^{2\beta}\, \dd t 
		\notag
		\\
		=&\,\, \frac{2^{2\alpha+2\beta+1}\, (1/2)_{l}}{(l!)^2}\,B(2\alpha+1, 
		2\beta +1)\,T_l(\alpha, \beta),
		\label{eq01}
	\end{align}
	where
	\begin{align}
		T_l(\alpha, \beta)=&
		\,\,\frac{(\alpha +1)_l\, (\beta +1)_l}{(\alpha +\beta +3/2)_l}
		\notag
		\\
		=&\,\,\frac{\Gamma(\alpha+l+1)\,\Gamma(\beta+l+1)\,
			\Gamma(\alpha+\beta+3/2))}
		{\Gamma(\alpha+1)\,\Gamma(\beta+1)\,\Gamma(\alpha +\beta +3/2+l)}  
		\label{eq01*}
	\end{align} 
	is a rational function of $\alpha$ and $\beta$.
\end{lemma}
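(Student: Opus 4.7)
The plan is to reduce the left-hand side of \eqref{eq01} to a terminating ${}_3F_2$ at $1$ and evaluate it by Watson's summation theorem. First, I apply the Rodrigues formula \eqref{eq9.11} to write
\[
(1-t)^{\alpha}(1+t)^{\beta}\,P_l^{(\alpha,\beta)}(t) = \frac{(-1)^l}{2^l\, l!}\,u^{(l)}(t), \qquad u(t)=(1-t)^{l+\alpha}(1+t)^{l+\beta},
\]
so that squaring and integrating gives $I_l = \frac{1}{4^l(l!)^2}\int_{-1}^{1}\bigl(u^{(l)}(t)\bigr)^2\,\dd t$. Since $\Re\alpha,\Re\beta>-1/2$, all derivatives $u^{(j)}$ with $j<l$ vanish at $t=\pm 1$, so $l$ integrations by parts yield $I_l = \frac{(-1)^l}{4^l(l!)^2}\int_{-1}^{1} u(t)\,u^{(2l)}(t)\,\dd t$.

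Next, Leibniz's rule writes $u^{(2l)}(t)$ as a finite sum over $k$ of the terms $(1-t)^{l+\alpha-k}(1+t)^{\beta-l+k}$ with coefficients $\binom{2l}{k}(-1)^k\langle l+\alpha\rangle_k\langle l+\beta\rangle_{2l-k}$. After multiplying by $u(t)$, term-by-term integration produces a single sum of Beta integrals $B(2l+2\alpha-k+1,\,2\beta+k+1)$. Converting each $\Gamma(a-k)$ via $\Gamma(a-k)=\Gamma(a)/[(-1)^k(1-a)_k]$ and each factorial via $(2l-k)!=(2l)!\,(-1)^k/(-2l)_k$, the sum collapses into
\[
I_l \;=\; C_l(\alpha,\beta)\cdot {}_3F_2\!\left[\begin{matrix}-2l,\ 2\beta+1,\ -l-\alpha \\ \beta-l+1,\ -2l-2\alpha\end{matrix};\,1\right]
\]
for an explicit Gamma-function prefactor $C_l(\alpha,\beta)$ (assembled from the $(2l)!$, the falling-factorial ratios, and the common denominator $\Gamma(2l+2\alpha+2\beta+2)$ of the Beta functions).

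The crucial observation, and the main obstacle for discovering the proof, is that this particular ${}_3F_2$ lies in Watson's summable class: with $a=-2l$, $b=2\beta+1$, $c=-l-\alpha$ the two lower parameters coincide with $(a+b+1)/2=\beta-l+1$ and $2c=-2l-2\alpha$. Watson's theorem therefore gives
\[
{}_3F_2\!\left[\begin{matrix}a,\,b,\,c \\ \tfrac{a+b+1}{2},\,2c\end{matrix};\,1\right] \;=\; \frac{\sqrt{\pi}\,\Gamma\!\left(\tfrac{a+b+1}{2}\right)\Gamma\!\left(c+\tfrac12\right)\Gamma\!\left(\tfrac{1-a-b}{2}+c\right)}{\Gamma\!\left(\tfrac{1+a}{2}\right)\Gamma\!\left(\tfrac{1+b}{2}\right)\Gamma\!\left(\tfrac{1-a}{2}+c\right)\Gamma\!\left(\tfrac{1-b}{2}+c\right)}.
\]
The presence of the half-integer shift $\Gamma(\alpha+\beta+\tfrac32)$ in the target $T_l(\alpha,\beta)$ is precisely the fingerprint produced by this theorem, which is why the identification works.

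What remains is bookkeeping. Using the reflection formula $\Gamma(s)\Gamma(1-s)=\pi/\sin(\pi s)$, each Gamma at a negative argument (namely $\Gamma(\tfrac12-l)$, $\Gamma(\tfrac12-l-\alpha)$, $\Gamma(-\alpha-\beta)$, $\Gamma(-l-\alpha-\beta)$, $\Gamma(\beta-l+1)$) is rewritten as a Gamma at a positive argument; the prefactor $(-1)^l$ from the integration by parts cancels against the trigonometric factors. Then the Legendre duplication formula $\Gamma(2x)=2^{2x-1}\pi^{-1/2}\Gamma(x)\Gamma(x+\tfrac12)$ is applied to $\Gamma(2\alpha+1)$, $\Gamma(2\beta+1)$, $\Gamma(2\alpha+2\beta+2)$, $\Gamma(2l+2\alpha+1)$, and $\Gamma(2l+2\alpha+2\beta+2)$; the powers of $2$ balance to the single factor $2^{2\alpha+2\beta+1}$, and every remaining Gamma cancels in matching pairs. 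What survives is exactly $\frac{2^{2\alpha+2\beta+1}(1/2)_l}{(l!)^2}\,B(2\alpha+1,2\beta+1)\,T_l(\alpha,\beta)$, which is \eqref{eq01}.
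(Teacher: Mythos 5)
Your proposal follows essentially the same route as the paper: Rodrigues' formula, $l$ integrations by parts, Leibniz's rule and Euler's Beta integral to produce a single sum, recognition of that sum as the terminating series ${}_3F_2(-2l,\,2\beta+1,\,-\alpha-l;\,\beta+1-l,\,-2\alpha-2l;\,1)$ in Watson's summable class, and then the reflection and duplication formulas to reduce the resulting Gamma quotient to $T_l(\alpha,\beta)$. This is exactly the paper's decomposition into its Lemmas 3.1 and 3.2.

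There is, however, one genuine gap. Watson's evaluation \eqref{eq0.016} is valid only under the condition $\Re(2c-a-b+1)>0$, and the paper explicitly warns that this condition is necessary for the truth of the evaluation \emph{even when the series terminates}. With your parameters $a=-2l$, $b=2\beta+1$, $c=-\alpha-l$ the condition reads $\Re(-2\alpha-2\beta)>0$, i.e.\ $\Re(\alpha+\beta)<0$, which fails throughout most of the range $\Re\alpha>-1/2$, $\Re\beta>-1/2$ in which you invoke the theorem (already for $\alpha=\beta=1$). The paper circumvents this by first proving that the sum $W_l(\alpha,\beta)$ produced by the Leibniz expansion is a \emph{polynomial} in $\alpha$ and $\beta$, then verifying the Watson evaluation only on the open set $\{\Re\alpha<0,\ \Re\beta<0\}$ where the condition holds, and finally extending the resulting identity to all $\alpha,\beta$ by polynomiality. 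Your argument needs this (or an equivalent analytic-continuation step) inserted before the Watson evaluation can be applied; as written, the step "Watson's theorem therefore gives" is not justified for the parameters of the lemma. The rest of the bookkeeping with the reflection and duplication formulas matches the paper and is fine.
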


The integral \eqref{eq01} converges for $\Re\alpha >-1/2$ and $\,\Re\beta 
>-1/2$,
and represents in this region 
a holomorphic function of two complex variables.  
The equality \eqref{eq01} defines an analytic continuation of 
the integral \eqref{eq01} to $\alpha\in\Cc$ and $\beta\in\Cc$.

For $\alpha=d/2$, $\beta=d_0/2$ and $l$ replaced with $l-1$,
the equality \eqref{eq01} coincides with \eqref{eq1.33}. This proves 
Theorem~1.1. 

Lemma~1.2 will be proved in Section~3. 
The crucial point in the proof is Watson's theorem on the value of 
hypergeometric series $_3F_2(1)$.

\section{Spherical functions. Proofs of Lemma~1.1 and Theorem~1.2}\label{sec4}

{\it 2.1. Invariant kernels and spherical functions.} The general theory of 
spherical 
functions on homogeneous spaces can be found 
in \cite{21, 22, 35, 37}.
The homogeneous spaces $\QQQ$ of interest to us belong to the class of 
so-called commutative spaces and symmetric Gelfand pairs. In this case the 
theory 
becomes significantly simpler. For Euclidean spheres $S^d$ this theory is well 
known, see, for example, \cite{17*, 24**}.  
However, the theory 
of spherical functions on general spaces $\QQQ$ is probably not commonly 
known. In this section we describe the basic facts about spherical functions on 
spaces $\QQQ$ in a form convenient for our purposes.

Let us consider the quasi-regular representation 
$U(g)f(x)=f(g^{-1}x),\, f\in L_2(\QQQ)$, $x \in\QQQ,\, g\in G$, and its 
decomposition 
into the orthogonal sum 
\begin{equation}\label{eqdecom}
	U(g)=\widehat{\bigoplus} _{l\ge 0}\,\, U_l(g), \quad 
	L_2(\QQQ)=\widehat{\bigoplus} _{l\ge 0}\,\, V_l\, ,
\end{equation} 
of irreducible representations $U_l(g)$ in mutually orthogonal subspaces $V_l$ 
of dimensions $m_l<\infty$.

Let $\AAA$ denote the algebra of Hilbert--Schmidt operators $K$ in $L_2(\QQQ)$ 
commuting with the action of the group $G$: $KU(g)=U(g)K, g\in G$. Each 
$K\in\AAA$ is an integral operator
$Kf(x)=\int_{\QQQ} \, K(x,y)\, f(y)\, \dd \mu (y)$ with the {\it invariant 
kernel}: 
\begin{equation}\label{eqC*}
	K(gx_1,gx_2)=K(x_1,x_2), \,\, x_1, x_2\in \QQQ,\,\, g \in G,
\end{equation}
and the Hilbert-Schmidt norm $\Vert K\Vert_{HS}$ defined by
\begin{align}\label{eqC}
\Vert K\Vert^2_{HS}=&\Tr\, K K^*
\notag
\\
=&\int_{\QQQ\times \QQQ} \, |K(x,y)|^2\, \dd \mu (x)  \dd 
\mu (y)=\int_{\QQQ} \, |K(x,y)|^2\, \dd \mu (x) <\infty,
\end{align}
where $\Tr$ denotes the trace of an operator, and the second integral is 
independent of $y$ in view of \eqref{eqC*}. 

Since the space $\QQQ$ is two-point homogeneous, the condition \eqref{eqC*}
implies that the kernel $K(x_1,x_2)$ depends only on the distance $\theta 
(x_1,x_2)$, and can be written as
\begin{equation}\label{eqC1}
	K(x_1,x_2)=K(\theta (x_1,x_2))=k(\cos \theta (x_1,x_2)), \,\, x_1, 
	x_2\in \QQQ,
\end{equation}
with functions $K(u),\, u\in [0,\pi],$ and $k(t),\, t\in [-1,1]$. The 
cosine 
is 
presented here for convenience in some further calculations. It is useful to 
keep in 
mind that the formula \eqref{eqC1} can be also written as 
\begin{equation}\label{eqC1*}
	K(x_1,x_2)=K(\theta (x,x_0))=k(\cos \theta (x,x_0)), 
\end{equation}
where $x_0\in \QQQ$ is the fixed point of the subgroup $H,\,  x_1=g_1x_0,\, 
x_2=g_2x_0,\, x=g^{-1}_2g_1x_0,\,\,g_1,\, g_2\in G$, and
$K(hx,x_0)=K(x,x_0),\, h\in H$. Therefore, invariant 
kernels can be thought of as functions on the double co-sets $H\setminus G/H$.

In terms of the function $K(\cdot)$ and $k(\cdot)$, the Hilbert-Schmidt norm 
\eqref{eqC} 
takes the form 
\begin{align}\label{eqC2}
	\Vert K\Vert_{HS}^2&=\int^{\pi}_0\,|K(u)|^2\,\dd v(u)
\notag
\\
	=&\kappa
	\int^{\pi}_0\, |K(u)|^2(\sin\frac{1}{2}u)^{d-1}(\cos 
	\frac{1}{2}u)^{d_0-1}\,\dd u 
\notag
\\
	=&\kappa\, 2^{1-d/2-d_0/2}\,\int_{-1}^1\,|k(z)|^2 
	\,(1-z)^{\frac{d}{2}-1}\, (1+z)^{\frac{d_0}{2}-1}\, 
	\, \dd z , 
\end{align} 
here $v(\cdot)$ is the volume function \eqref{eq2.2}. 

We conclude from \eqref{eqC*} and \eqref{eqC1} that for $K\in\AAA$ its kernel 
is symmetric, 
$K(x_1,x_2)=K(x_2,x_1)$, the value $K(x,x)=k(1)$ is independent 
of $x\in \QQQ$, and if an operator $K$ is self-adjoint, then its kernel is 
real-valued. 

It follows from \eqref{eqC*} and \eqref{eqC1} that the algebra $\AAA$ is 
commutative. Indeed,
\begin{align*}\label{eqC3}
	(K_1K_2)(x_1,x_2)&=\int_{\QQQ} \,K_1(x_1,x)K_2(x,x_2) 
	\dd \mu(x) 
	\notag
	\\
	=& \int_{\QQQ} \,K_2(x_2,x)K_1(x,x_1) 
	\dd \mu(x) =(K_2K_1)(x_2,x_1)=(K_2K_1)(x_1,x_2).
\end{align*}
Therefore, the decomposition \eqref{eqdecom} is 
multiplicity-free, that is any two representations $U_l$ and $U_{l'}$, $l\ne 
l'$, are non-equivalent, because otherwise the algebras $\AAA$ 
could not be commutative. 

Let $P_l$ denote orthogonal projectors in $L_2(\QQQ)$ onto the subspaces $V_l$ 
in 
\eqref{eqdecom}, 
\begin{equation}\label{eqproj}
	P_l^*=P_l\, ,\quad P_l\, P_{l'} 
	=\delta_{l,l'}\, P_l\, ,\quad
	\sum\nolimits_{l\ge 0}P_l = I\, , 
\end{equation}
where $\delta_{l,l'}$ is Kronecker's symbol and $I$ is the identity operator in 
$L_2(\QQQ)$. 
By Schur's lemma,  we have for $K\in\AAA$
\begin{equation}\label{eqC9}
	P_l\,K\,P_{l'}=\delta_{l,l'}\,c_l(K)\,P_l, ,
\end{equation}
where $c_l(K)$ is a constant. Calculating the trace of both sides of the 
equality 
\eqref{eqC9}, we find $c_l(K)=m_l^{-1}\Tr KP_l$. Therefore, we have the 
expansions 
\begin{equation}\label{eqC9*}
K=\sum\nolimits_{l,l'\ge 0}	P_l\,K\,P_{l'}
=\sum\nolimits_{l\ge 0}c_l(K)\,P_{l} ,
\end{equation}
and
\begin{equation}\label{eqC9**}
	K_1\,K_2=\sum\nolimits_{l\ge 0}c_l(K_1)\,c_l(K_2)\,P_{l} ,
\end{equation}
for $K_1, K_2\in\AAA$. It follows from \eqref{eqC9**} with $K_1=K$ and 
$K_2=K^*$ that
\begin{equation}\label{eqC9***}
	||K||_{HS}^2=\sum\nolimits_{l\ge 0}m_l\,|c_l(K)|^2\,<\infty ,
\end{equation}

The equality \eqref{eqC9***} implies that the series 
\eqref{eqC9**} converges in the norm \eqref{eqC}, and the 
series \eqref{eqC9**} converges in the norm \eqref{eqC} for the subclass of 
nuclear operators. 

Since $V_l$ are invariant subspaces, $P_l\in\AAA$, their 
kernels $P_l(\cdot,\cdot)$ are symmetric and real-valued, and can be written as
follows
\begin{equation}\label{eqC4}
	P_l(x_1,x_2)=p_l(\cos 
	\theta(x_1,x_2))=\sum\nolimits_1^{m_l}\,\psi_{l,j}(x_1)\,\psi_{l,j}(x_2),
\end{equation} 
where $\{\psi_{l,j} (\cdot)\}_1^{m_l}$ is an orthonormal and real-valued basis 
in $V_l$. Hence,  subspace $V_l$ and irreducible representations $U_l$
in \eqref{eqdecom} can be thought of as defined over the field of reals, this 
means that all representations  $U_l$ in \eqref{eqdecom} are of the real type.

Using \eqref{eqC4}, we obtain the formulas
\begin{equation}\label{eqC5}
	\Vert P_l\Vert^2_{HS}=m_l, \quad    \Tr P_l=p_l(1)=m_l >0.
\end{equation}  
Furthermore, 
\begin{equation}\label{eqC6}
P_l(x,x)=p_l(1)=\sum\nolimits_1^{m_l}\,\psi_{l,j}(x)^2	.
\end{equation} 
is independent of $x\in \QQQ$. 
Applying Cauchy-Schwartz inequality to \eqref{eqC4} and taking \eqref{eqC6} 
into account, we obtain the bound
\begin{equation}\label{eqC8}
	|P_l(x_1,x_2)|=|p_l(\cos\theta(x_1,x_2))|\le p_l(1).
\end{equation}

It follows from \eqref{eqC6} and \eqref{eqC5} that the mapping 
\begin{equation}\label{eqC7}
\Pi_l:\,\QQQ\ni x\to 
(m_l^{-1/2}\psi_{l,1}(x)\dots m_l^{-1/2}\psi_{l,m_l}(x))\in 
S^{m_l-1}\subset\Rr^{m_l}	
\end{equation}
defines an embedding of the space $\QQQ$ into the unite sphere in $\Rr^{m_l}$.

By definition the (zonal) {\it spherical function} are kernels of the operators 
$\Phi_l=m_l^{-1}P_l$:
\begin{equation}\label{eqC8}
\Phi_l(x_1,x_2)=	
\phi_l(\cos\theta(x_1,x_2))=\frac{p_l(\cos\theta(x_1,x_2))}{p_l(1)}.
\end{equation}
From \eqref{eqC6} and \eqref{eqC8} we conclude that 
$|\phi_l(\cos\theta(x_1,x_2))|\le\phi_l(1)=1$. Comparing 
\eqref{eqC5}, \eqref{eqC6} and \eqref{eqC8}, we find the formulas for dimensions
\begin{align}\label{eqC8*}
m_l&=||\Phi_l||_{HS}^{-2}
=\left(\kappa
\int^{\pi}_0\, |\phi_l(\cos u)|^2(\sin\frac{1}{2}u)^{d-1}(\cos 
\frac{1}{2}u)^{d_0-1}\,\dd u  \right)^{-1}
\notag
\\
&=\left(\kappa\,  2^{1-d/2-d_0/2}\int_{-1}^1\,|\phi_l(t)|^2 
\,(1-t)^{\frac{d}{2}-1}\, (1+t)^{\frac{d_0}{2}-1}\, 
\, \dd t \right)^{-1}.	
	\end{align}

In terms of spherical functions the formulas \eqref{eqC9*}, \eqref{eqC9**} and 
\eqref{eqC9***} take the form
\begin{equation}\label{eqC10*}
K(x_1,x_2)=	k(\cos\theta(x_1,x_2))
	=\sum\nolimits_{l\ge 0}c_l(K)\,m_l\,\phi_l(\cos\theta(x_1,x_2)) ,
\end{equation}
where
\begin{align}\label{eqC10**}
	c_l(K)&=\Tr K\Phi_l=\int_Q K(x,x_0)\,\Phi (x,x_0)\,\dd \mu (x)
	\notag
	\\
	&=\kappa
	\int^{\pi}_0\, K(u)\,\phi_l(\cos u)\,(\sin\frac{1}{2}u)^{d-1}(\cos 
	\frac{1}{2}u)^{d_0-1}\,\dd u  
	\notag
	\\
	&=\kappa\,  2^{1-d/2-d_0/2}\int_{-1}^1\,k(t)\,\phi_l(t) 
	\,(1-t)^{\frac{d}{2}-1}\, (1+t)^{\frac{d_0}{2}-1}\, 
	\, \dd t ,	
\end{align}
and 
\begin{align}\label{eqC10***}
(K_1K_2)(x_1,x_2)=&\int_Q K_1(\theta (x_1,y))\,K_2(\theta (y,x_2))\,\dd\mu (y)
\notag
\\
=&\int_Q k_1(\cos\theta (x_1,y))\,k_2(\cos\theta (y,x_2))\,\dd\mu (y)
	\notag
	\\
	=&\sum\nolimits_{l\ge 0}c_l(K_1)\,c_l(K_2)\,m_l\,\phi_{l}(\cos\theta 
	(x_1,x_2)) ,
\end{align} 
and  
\begin{equation}\label{eqC10****}
	||K||_{HS}^2=\sum\nolimits_{l\ge 0}m_l\,|c_l(K)|^2 <\infty \, ,
\end{equation}

The facts listed above are valid for all compact two-point homogeneous spaces. 
Since the spaces $Q$ are also symmetric Riemannian manifolds of rank one, the 
invariant kernels 
$p_l(\cos\theta (x,x_0))$ are eigenfunctions of the radial part of the 
Laplace--Beltrami operator on $Q$ (in the spherical coordinates centered at 
$x_0$). This leads to the following explicit formula for spherical functions
\begin{equation}
\Phi (x_1,x_2)=\phi_l(\cos\theta (x_1,x_2)) = \frac{P^{(\frac{d}{2} 
-1,\frac{d_0}{2} -1)}_l 
(\cos\theta(x_1,x_2))}
{P^{(\frac{d}{2} -1,\frac{d_0}{2} -1)}_l (1)}, \quad l\ge 0.
\label{eq4.1}
\end{equation}
where $P^{(\alpha,\beta)}_n (t),\, t\in [-1,1],$ are  Jacobi polynomials 
\eqref{eq9.11}.
For more details, we refer to \cite[p.~178]{19}, 
\cite[Chap.~V, Theorem~4.5]{22},
\cite[pp.~514--512, 543--544]{24}, 
\cite[Chapters 2 and 17]{35}: 
\cite[Theorem~11.4.21]{37}.

From \eqref{eq8.23} and \eqref{eq8.26**} we obtain
\begin{equation}
	P^{(\frac{d}{2} -1,\frac{d_0}{2} -1)}_l(1) =
	\frac{\Gamma (l+d/2)}{\Gamma (l+1) \Gamma(1+d/2)}\, , 
	\label{eq8.23**}
\end{equation}
and $M_l=M_l(\frac{d}{2} -1,\frac{d_0}{2} -1)$, where 
$M_0=B(d/2,d_0/2)^{-1}$ and 
\begin{equation}
	M_l=
	(2l - 1+(d+d_0)/2 ) 
	\frac{\Gamma (l+1)\Gamma (l-1+(d+d_0)/2)}{\Gamma(l+d/2)
		\Gamma (l+d_0/2)}\,  ,
	\; \; l\ge 1,
	\label{eq8.26***}
\end{equation}
Substituting \eqref{eq4.1} 
into \eqref{eqC8*} and using \eqref{eq8.23**} and \eqref{eq8.26***}, we 
obtain the following explicit formulas for dimensions of irreducible 
representations \eqref{eqdecom} : $m_0=1$ and
\begin{align}\label{eq8.27*}
	m_l&=M_l\,\kappa ^{-1} \,
	(P^{(\frac{d}{2} -1,\frac{d_0}{2} -1)}(1))^2 
\notag
\\	
	=&(2l-1+(d+d_0)/2)\frac{\Gamma(l-1+(d+d_0)/2)\Gamma(l+d/2)\Gamma(d_0/2)}
	{\Gamma((d+d_0)/2)\Gamma(l+d_0/2)\Gamma(d/2)\Gamma(l+1)},\quad l\ge 1.
\end{align}

{\it 2.2. Spherical functions and metrics.}
In the following Lemma~2.1 we will describe the spherical function expansions 
for 
the chordal and symmetric difference metrics. Originally these expansions have 
been established in \cite[Lemma~4.1]{0} and \cite[Theorema~4.1(ii)]{31}. For 
completeness, we give a short proof of these results.
\begin{lemma}\label{lm2.1}
(i)	For the chordal metric \eqref{eq2.4}, we have	
\begin{equation}
\tau(x_1,x_2)=\frac12\sum\nolimits_{l\ge 1}\, M_l\,C_l\,\left[\, 
1-\phi_l(\cos\theta(x_1,x_2))\,\right],
\label{eq9.16**}
\end{equation}
where 
\begin{align}
C_l& = B((d+1)/2, l+d_0/2)\, 
\Gamma (l+1)^{-1}\, (1/2)_{l-1}\, P^{(\frac{d}{2} -1,\frac{d_0}{2} -1)}_l 
(1)\,.
\label{eq8.26*}
\end{align}

(ii) For the symmetric difference metrics \eqref{eq1.13}, we have 
\begin{equation}
\theta^{\Delta}(\xi, x_1,x_2) =
\kappa \, \sum\nolimits_{l\ge 1}\, l^{-2}\, M_l\, A_l(\xi)\, 
\left[\, 1-\phi_l(\cos\theta(x_1,x_2))\,\right],
\label{eq9.16}
\end{equation}
where 
\begin{equation}
A_l(\xi)=\int^{\pi}_{0}
\left\{ P^{(\frac{d}{2},\frac{d_0}{2})}_{l-1} (\cos r)\right\}^2  
(\sin\frac 12 r)^{2d}(\cos \frac12 r)^{2d_0} 
\, \dd \xi (r). 
\label{eq9.16*}
\end{equation}

The series \eqref{eq9.16**} and \eqref{eq9.16} converge absolutely and 
uniformly.
\end{lemma}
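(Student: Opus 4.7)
The plan is to expand both kernels in zonal spherical functions using the apparatus of subsection~2.1. Both $\tau(x_1,x_2)$ and $v(r) - \mu(\BBB(y_1, r) \cap \BBB(y_2, r))$ are functions of $\cos\theta(x_1, x_2)$ (by distance-invariance and \eqref{eqC1}) and both vanish on the diagonal: $\tau(x,x) = 0$ trivially, while $\BBB(y,r) \cap \BBB(y,r) = \BBB(y,r)$ has measure $v(r)$. Since $\phi_l(1) = 1$, this forces the $l = 0$ term in the general expansion \eqref{eqC10*} to cancel, and the series reorganizes as $-\sum_{l \ge 1} c_l m_l \bigl(1 - \phi_l(\cos\theta(x_1, x_2))\bigr)$. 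It therefore remains to identify these coefficients with $\tfrac{1}{2} M_l C_l$ and $\kappa\, l^{-2} M_l A_l(\xi)$ respectively.

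For part (i), I would take $k(t) = \sqrt{(1-t)/2}$; by \eqref{eqC10**}, $c_l(\tau)$ is a weighted integral of $k(t)\,P^{(d/2-1, d_0/2-1)}_l(t)$. I would absorb the polynomial via Rodrigues' formula \eqref{eq9.11} and integrate by parts $l$ times; all boundary terms vanish since $l + \alpha, l + \beta > 0$ with $\alpha = d/2 - 1, \beta = d_0/2 - 1$. This leaves an integral of $\tfrac{d^l}{dt^l}(1-t)^{1/2}$ against $(1-t)^{l+\alpha}(1+t)^{l+\beta}$. A direct computation using the falling factorial identity \eqref{eq8.26***} yields
\begin{equation*}
\frac{d^l}{dt^l}(1-t)^{1/2} = -\frac{1}{2}\,(1/2)_{l-1}\,(1-t)^{1/2-l}, \qquad l \ge 1,
\end{equation*}
which is the source of the $(1/2)_{l-1}$ factor in \eqref{eq8.26*}. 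The remaining integral is a beta function $B((d+1)/2, l + d_0/2)$, and combining with $m_l$ via \eqref{eq8.27*} gives precisely $-c_l(\tau)\,m_l = \tfrac{1}{2} M_l C_l$.

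For part (ii), the key observation is that $\mu(\BBB(y_1, r) \cap \BBB(y_2, r)) = (T_r^2)(y_1, y_2)$, where $T_r \in \AAA$ is the self-adjoint Hilbert--Schmidt operator with kernel $\chi(\BBB(y, r), x)$ (with $\|T_r\|^2_{HS} = v(r) < \infty$, and $T_r \in \AAA$ by the $G$-invariance of the metric, cf.~\eqref{eq1.100}). By the multiplicativity \eqref{eqC10***}, $c_l(T_r^2) = c_l(T_r)^2$, reducing the task to computing $c_l(T_r)$. Via \eqref{eqC10**} and the substitution $t = \cos u$, this equals a constant multiple of $\int_{\cos r}^1 P^{(\alpha,\beta)}_l(t)(1-t)^{\alpha}(1+t)^{\beta}\,dt$. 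The reduction identity
\begin{equation*}
(1-t)^\alpha(1+t)^\beta P^{(\alpha, \beta)}_l(t) = -\frac{1}{2l}\,\frac{d}{dt}\bigl[(1-t)^{\alpha+1}(1+t)^{\beta+1} P^{(\alpha+1, \beta+1)}_{l-1}(t)\bigr],
\end{equation*}
which follows by comparing \eqref{eq9.11} at parameters $(\alpha, \beta)$ and $(\alpha+1, \beta+1)$, collapses this integral to a single boundary term at $t = \cos r$ proportional to $\sin^d(r/2)\cos^{d_0}(r/2)\,P^{(d/2, d_0/2)}_{l-1}(\cos r)$. Squaring, integrating against $\xi$, and eliminating $m_l$ via \eqref{eq8.27*} produces the coefficient $\kappa\, l^{-2}\, M_l\, A_l(\xi)$ with $A_l(\xi)$ as in \eqref{eq9.16*}.

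The principal technical obstacle in both (i) and (ii) is simply the careful bookkeeping of normalizations: the constants $M_l$, $m_l$, $P^{(\alpha, \beta)}_l(1)$, $\kappa$, and the many powers of $2$ must all be tracked through the Rodrigues computation and reassembled into the stated form. For absolute and uniform convergence of the two series, I would invoke $|\phi_l(\cos\theta)| \le 1$ together with standard large-$l$ asymptotics of Jacobi polynomials and beta functions: a routine estimate gives $M_l C_l = O(l^{-2})$ for the chordal series (using $M_l \sim l$, $(1/2)_{l-1}/l! \sim l^{-3/2}$, $P^{(d/2-1, d_0/2-1)}_l(1) \sim l^{d/2 - 1}$, and $B((d+1)/2, l + d_0/2) \sim l^{-(d+1)/2}$), while for the symmetric-difference series the explicit $l^{-2}$ prefactor in \eqref{eq9.16}, combined with pointwise control of the Jacobi polynomials and finiteness of $\xi$ on $\III$, supplies the required decay.
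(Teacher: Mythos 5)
Your proposal follows essentially the same route as the paper: both expand the two kernels in zonal spherical functions, compute the coefficients by Rodrigues' formula with $l$-fold integration by parts for the chordal metric and by the operator-square identity $\mu(\BBB(y_1,r)\cap\BBB(y_2,r))=(T_r^2)(y_1,y_2)$ together with the first-order reduction identity for the symmetric difference metric, and then fix the constant term by evaluating on the diagonal; the paper merely cites Gradshteyn--Ryzhik for the two integrals that you derive by hand. The one place where you are looser than the paper is the convergence of the series in (ii): the crude bound $|P^{(d/2,d_0/2)}_{l-1}(\cos r)|\le P^{(d/2,d_0/2)}_{l-1}(1)\sim l^{d/2}$ gives a divergent majorant $O(l^{d-1})$, so one must either use the sharper weighted estimates for Jacobi polynomials away from the endpoints or, as the paper does, deduce summability of $\sum_l m_l\,c_l(T_r)^2=\Vert T_r\Vert_{HS}^2=v(r)-v(r)^2<\infty$ directly from the Hilbert--Schmidt norm identity \eqref{eqC10****} and integrate in $r$ by positivity of the terms.
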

\begin{proof}[Proof.] {\it (i)} Let us consider the expansion \eqref{eqC10*} 
for the 
chordal metric \eqref{eq2.4}. Since
\begin{equation}
	\tau(x_1,x_2)=\sin \frac{1}{2}\theta(x_1,x_2).
	= \sqrt{\frac{1-\cos (x_1, x_2)}{2}}\, , 
	\label{eq2.4*}
\end{equation}
we put $k(t)=\sqrt{(1-t)/2}$ in the formula \eqref{eqC10**}. This gives 
\begin{equation}\label{eqA10*}
	c_l(K)=
	\kappa\,  2^{1/2-d/2-d_0/2} \left(P^{(\frac{d}{2} -1,\frac{d_0}{2} 
	-1)}_l(1) 
	\right)^{-1}\, I_l(K)\, ,
\end{equation}
where
\begin{align}\label{eqA10**}
	I_l(K)&=\int_{-1}^1 \,P^{(\frac{d}{2} -1,\frac{d_0}{2} -1)}_1(t) 
	\,(1-t)^{\frac{d}{2}-\frac{1}{2}}\, 
	(1+t)^{\frac{d_0}{2}-1}\, \dd t 
	\notag
	\\
	&=2^{d/2+d_0/2-1/2}	(l!)^{-1}\,(-1/2)_l\, B(d/2+1/2,d_0/2+l),
\end{align}
The formula \eqref{eqA10**} can be found in the tables \cite[Sec.7.391, 
Eq.(4)]{01} or derived directly, using Rodrigues' formula \eqref{eq9.11} and 
integrating $l$ times by parts.

Notice that the symbol $(-1/2)_l$ in \eqref{eqA10**} takes the values 
$(-1/2)_0=1$ and $(-1/2)_l=-1/2(1/2)_{l-1}$ for $l\ge 1$.
Using \eqref{eqA10**}, \eqref{eqA10*}, \eqref{eq8.27*} and \eqref{eq8.26***},
we find that 
$
	m_l\,c_l(k)=-1/2\,M_l\,C_l
	$
for $l\ge 1$,
where $C_l$ are given in \eqref{eq8.26*}. Therefore, the expansion 
\eqref{eqC10*} takes the form
\begin{equation}
	\tau(x_1,x_2)=c_0-\frac12\sum\nolimits_{l\ge 1}\, M_l\,C_l\, 
	\phi_l(\cos\theta(x_1,x_2))\, ,
	\label{eqA.16}
\end{equation}
We put here $x_1=x_2$ to obtain $c_0=1/2\sum\nolimits_{l\ge 1}\, M_l\,C_l$.
Substituting this equality into \eqref{eqA.16}, we arrive to the expansion 
\eqref{eq9.16**}.

Applying Stirling's approximation to the gamma functions in $M_l$ and $C_l$, we 
observe that the terms in \eqref{eq9.16**} are of the order $O(l^{-2})$.
Therefore, the series \eqref{eq9.16**} converges absolutely and uniformly.

{\it (ii)}  Let us consider the expansion \eqref{eqC10*} 
for the symmetric difference metric \eqref{eq1.13}. We have
\begin{equation}
\theta^{\Delta}(\xi,x_1,x_2) 
=\int_0^{\pi} \Big(v(r)-\mu (\BBB(y_1,r)\cap \BBB(y_2,r))\Big)\, 
\dd \xi (r)\, , 
\label{eqA.16*}	
	\end{equation}
see \eqref{eq1.6*}. In view of \eqref{eq1.100} the term 
$\mu (\BBB(y_1,r)\cap \BBB(y_2,r))$ can be written as
\begin{equation}
\mu (\BBB(y_1,r)\cap 
\BBB(y_2,r))=\int_{\QQQ}\chi_r(\theta(x_1,y))\,\chi_r(\theta(y,x_2)\,\dd \mu 
(y) , 
	\label{eq1.100*}
\end{equation}
where $\chi_r(\cdot)$ is the characteristic function of the segment 
$[0,r],\, 0\le r\le\pi$.

Let us consider the expansion \eqref{eqC10*} for the invariant kernel
$\chi_r(\theta(x_1,x_2))$. We put $K(u)=\chi_r(u),\, u\in [0,\pi],$
to calculate the corresponding coefficients \eqref{eqC10**}. We obtain
$c_0(K)=v(r)$ and
\begin{equation}\label{eqA10*}
	c_l(K)=
	\kappa\,  
	\left(P^{(\frac{d}{2} -1,\frac{d_0}{2} 
		-1)}_l(1) 
	\right)^{-1}\, I_l(K)\, ,\quad l\ge 1,
\end{equation}
where
\begin{align}\label{eqB10}
	I_l(K)&=
	\int^{r}_0\, \,P^{(\frac{d}{2} -1,\frac{d_0}{2} 
	-1)}_1(\cos u)\,(\sin\frac{1}{2}u)^{d-1}(\cos 
	\frac{1}{2}u)^{d_0-1}\,\dd u  
	\notag
	\\
	&=2^{1-d/2-d_0/2}\int_{\cos r}^1 \,P^{(\frac{d}{2} -1,\frac{d_0}{2} 
	-1)}_1(t) 
	\,(1-t)^{\frac{d}{2}-1}\, (1+t)^{\frac{d_0}{2}-1}\, 
	\, \dd t ,	
	\notag
	\\
	&=l^{-1}\, (\sin\frac{1}{2}r)^{d}\,(\cos 
	\frac{1}{2}r)^{d_0}\,P^{(\frac{d}{2},\frac{d_0}{2})}_{l-1}(\cos r)\, .
\end{align}
The last formula in \eqref{eqB10} can be extracted from the tables 
\cite[Sec.7.391, Eq.(11)]{01} or derived directly, using Rodrigues' formula 
\eqref{eq9.11}.

Using the formula \eqref{eqC10***} together with \eqref{eqA10*} and 
\eqref{eqB10}, we find that
\begin{align}
\mu (\BBB(x_1,r)\cap \BBB(x_2,r))&=
&v(r)^2+\kappa \, \sum\nolimits_{l\ge 1}\, l^{-2}\, M_l\, a_l(r)\, 
\phi_l(\cos\theta(x_1,x_2))\, ,
\label{eqA.16}
\end{align}
where
\begin{equation}
	a_l(r)=
	\left\{ P^{(\frac{d}{2},\frac{d_0}{2})}_{l-1} (\cos r)\right\}^2  
	(\sin\frac 12 r)^{2d}(\cos \frac12 r)^{2d_0} 
	\, \dd \xi (r). 
	\label{eq9.16**}
\end{equation}
Substituting \eqref{eqA.16} into \eqref{eqA.16*}, we obtain the expansion
\begin{equation}
	\theta^{\Delta}(\xi,x_1,x_2) 
	=\langle \theta^{\Delta}(\xi) \rangle  
	-\kappa \, \sum\nolimits_{l\ge 1}\, l^{-2}\, M_l\, A_l(\xi)\, 
	\phi_l(\cos\theta(x_1,x_2))\, ,
	\label{eqA.16**}
	\end{equation}
where $\langle \theta^{\Delta}(\xi) \rangle  
=\int_{0}^{\pi}\Big(v(r)-v(r)^2\Big)\, \dd\xi (r)$	is the average value of the
metric 
and $A_l(\xi)=\int_{0}^{\pi}\,a_l(r)\dd r$ are given in \eqref{eq9.16*}.
Since $\theta^{\Delta}(\xi)$ is a metric,  we put $x_1=x_2$ to obtain
	$
	\langle \theta^{\Delta}(\xi) \rangle = 
	\kappa \, \sum\nolimits_{l\ge 1}\, l^{-2}\, M_l\, A_l(\xi)\, .
	$	
Substituting this equality into \eqref{eqA.16**}, we arrive to the expansion 
\eqref{eq9.16}.

The series \eqref{eqA.16} and \eqref{eq9.16} converge absolutely and uniformly 
in view of \eqref{eqC10****}.
\end{proof}

\begin{proof}[2.3 Proof of Lemma~1.1.]
Let us consider the embedding \eqref{eqC7} for $l=1$. From the formula 
\eqref{eq8.27*} we find
\begin{equation}
m_1=\frac{d(d+d_0+2)}{2d_0}	=\frac{(n+1)(d+2)}{2} -1,\quad d=nd_0,
	\label{eq9.16***}
\end{equation}
and for $x_1, x_2\in \QQQ$, we have
\begin{equation}\label{eqC7***}
	\Vert\Pi_1(x_1)-\Pi_1(x_2)\Vert^2=2-2(\Pi_1(x_1),\Pi_1(x_2))
	=2(1-\phi_1(\cos\theta(x_1,x_2)),
	\end{equation}
where $\Vert\cdot\Vert$ and $(\cdot,\cdot)$ are the Euclidean norm and inner 
product 
in $\Rr^{m_1}$.

On the other hand, from Rodrigues' formula \eqref{eq9.11} we obtain
\begin{equation*}\label{eqC7**}
P^{(\frac{d}{2} -1,\frac{d_0}{2} -1)}_1(t) =((d+d_0)t+d-d_0)/4,		
\end{equation*}
$P^{(\frac{d}{2} -1,\frac{d_0}{2} -1)}_1(1)=d/2$, and
\begin{equation*}
\frac{1-t}{2}=\frac{d}{d+d_0}\left[1-
\frac{P^{(\frac{d}{2} -1,\frac{d_0}{2} -1)}_1(t)}{P^{(\frac{d}{2} 
-1,\frac{d_0}{2} -1)}_1(1)}\right].
\end{equation*}
Therefore,
\begin{equation}\label{eqC7**}
\tau (x_1,x_2)^2=
\frac{1-\cos\theta(x_1,x_2)}{2}=\frac{d}{d+d_0}\Bigl[1-
\phi_1(\cos\theta(x_1,x_2))\Bigr].		
\end{equation}
Comparing 
\eqref{eqC7***} and \eqref{eqC7**}, we complete the proof.
\end{proof}

\begin{proof}[2.4 Proof of Theorem~1.2.]
Comparing the expansions \eqref{eq9.16**} and \eqref{eq9.16}, we conclude
that the equality \eqref{eq2.8}
is equivalent to the series of formulas
\begin{align}
\gamma(Q)\,l^{-2}\, B(d/2, d_0/2)^{-1}\, A_l(\xi^{\natural})= C_l/2\, ,
\quad l\ge 1\, . 
\label{eq0.03}
\end{align}

The integral \eqref{eq9.16*} with the special measure 
$\dd\xi^{\natural} (r)=\sin r\,\dd r$ takes the form
\begin{align}
A_l(\xi^{\natural})&=\int^{\pi}_{0}
\left\{ P^{(\frac{d}{2},\frac{d_0}{2})}_{l-1} (\cos r)\right\}^2  
(\sin\frac12 r)^{2d}(\cos \frac12 r)^{2d_0} 
\,\sin r\, \dd r
\notag
\\
&=2^{-d-d_0}\int^1_{-1}
\left(P^{(d/2, d_0/2)}_{l-1}(t)\right)^2 \,\left(1-t\right)^{d}
\left(1+t\right)^{d_0}\, \dd t \, .
\label{eq0.05}
\end{align}
Hence, the formulas \eqref{eq0.03} can be written as follows
\begin{align}
\int^1_{-1}
\left(P^{(d/2, d_0/2)}_{l-1}(t)\right)^2 \,
\left(1-t\right)^{d}&
\left(1+t\right)^{d_0}\, \dd t 
\notag
\\
=\, &\frac{2^{d+d_0+1}\, (1/2)_{l-1}}{((l-1)!)^2}\,B(d+1, d_0 +1)\, T^*,
\label{eq0.03*}
\end{align}
where
\begin{equation}
T^*=\frac{(l!)^2\, B(d/2, d_0/2)\,\, C_l}
{4\,(1/2)_{l-1}\,B(d+1, d_0 +1)\,\gamma (Q)}\, .
\label{eq0.06*}
\end{equation}

On the other hand, using \eqref{eq8.23**} and \eqref{eq8.26*}, we find 
\begin{equation}
C_l=(l!)^{-1}\, (1/2)_{l-1}\frac{\Gamma (d/2+1/2)\,\Gamma (l+d/2)\,\Gamma 
	(l+d_0/2)}
{\Gamma (l+1/2+d/2+d_0/2)\,\Gamma (d/2)}\, .
\label{eq0.07}
\end{equation}
Substituting \eqref{eq0.07} and \eqref{eq1.33*} into \eqref{eq0.06*}, we obtain
\begin{align}
T^*=
& \pi^{-1/2}\,(d+d_0)^{-1}\,\frac{\Gamma (d+d_0 +2)}
{\Gamma (d+1)\,\Gamma (d_0 +1)}\,\times
\notag
\\
\notag
\\
&\times
\frac{\Gamma (d/2+1/2)\,\Gamma (l+d/2)\,\Gamma (d_0/2+1/2)\,\Gamma (l+d_0/2)}
{\,\Gamma (d/2 +d_0/2)\,\Gamma (l+d/2 +d_0/2 +1/2)}\, .
\label{eq0.08}
\end{align}
Applying the duplication formula for the gamma function
\begin{equation}
\Gamma (2z)=\pi^{-1/2}\,2^{2z-1}\, \Gamma (z)\,\Gamma(z+1/2) 
\label{eq0.09}
\end{equation}
to the first co-factor in \eqref{eq0.08}, we find  
\begin{align}
\pi^{-1/2}\,(d+d_0)^{-1}\,&\frac{\Gamma (d+d_0 +2)}
{\Gamma (d+1)\,\Gamma (d_0 +1)}
\notag
\\
=\,\,&\frac{\Gamma (d/2 +d_0/2)\,\Gamma (d/2 +d_0/2 +3/2)}
{\Gamma (d/2 +1/2)\,\Gamma (d_0/2 +1)\,\Gamma (d_0/2 +1/2)\,\Gamma (d_0/2 
	+1)}\, ,
\label{eq0.010}
\end{align}
where the relation $\Gamma (z+1) = z\Gamma (z)$ with $z=d/2+d_0/2$
has been also used.

Substituting \eqref{eq0.010} into \eqref{eq0.08}, we find that 
$T^*=T_{l-1}(d/2, d_0/2)$.	Replacing $l-1,\, l\ge 1,$ with $l\ge 0$, we 
compete 
the proof.
\end{proof}

\section{Proof of Lemma~1.2}\label{sec6}

Lemma~1.2 follows from Lemma~3.1 and Lemma~3.2 given below.
\begin{lemma}\label{lem4.1}
For all $l\ge 0$, $\Re\alpha >-1/2$ and $\,\Re\beta >-1/2$, we have
\begin{align}
\int^1_{-1}\left(P^{(\alpha, \beta)}_{l}(t)\right)^2& 
(1-t)^{2\alpha}
(1+t)^{2\beta}\, \dd t 
\notag
\\
& 
=\,\frac{2^{2\alpha+2\beta+1}}
{(l!)^2}\,B(2\alpha+1, 2\beta +1)\,
\frac{W_l(\alpha, \beta)}{(2\alpha +2\beta +2)_{2l}} \, ,
\label{eq0.011}
\end{align}
where
\begin{align}
W_l(\alpha, \beta&)
\notag
\\
=&\sum\nolimits_{k=0}^{2l}\frac{(-1)^{l+k}}{k!} 
\langle 2l \rangle_k \,\langle \alpha +l \rangle_k \,\langle \beta +l 
\rangle_{2l-k}\,
(2\alpha +1)_{2l-k} \, (2\beta +1)_k  
\label{eq0.012}
\end{align} 
is a polynomial of $\alpha$ and $\beta$.
\end{lemma}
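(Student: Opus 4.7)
The plan is to combine Rodrigues' formula \eqref{eq9.11} with repeated integration by parts and a single application of Leibniz's rule. Write $F(t) = (1-t)^{l+\alpha}(1+t)^{l+\beta}$. By Rodrigues' formula applied to one of the two Jacobi factors in the integrand,
\begin{equation*}
\int^1_{-1} \left(P^{(\alpha,\beta)}_l(t)\right)^2 (1-t)^{2\alpha}(1+t)^{2\beta}\,\dd t
= \frac{(-1)^l}{2^l l!}\int^1_{-1} P^{(\alpha,\beta)}_l(t)(1-t)^{\alpha}(1+t)^{\beta}\, F^{(l)}(t)\,\dd t.
\end{equation*}
The first step is to integrate by parts $l$ times to move all of the derivatives from $F^{(l)}$ onto the remaining factor. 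The boundary terms involve $F^{(j)}(\pm 1)$ for $0 \le j \le l-1$; by Leibniz each such $F^{(j)}$ is a linear combination of $(1-t)^{l+\alpha-i}(1+t)^{l+\beta-(j-i)}$ with $0 \le i \le j < l$, so the exponent of $1-t$ at $t=1$ has real part $>l+\Re\alpha-(l-1)>1/2$ and similarly at $t=-1$ the exponent of $1+t$ has real part $>1/2$. Hence every boundary term vanishes under the hypothesis $\Re\alpha,\Re\beta>-1/2$, and we obtain
\begin{equation*}
\int^1_{-1} \left(P^{(\alpha,\beta)}_l(t)\right)^2 (1-t)^{2\alpha}(1+t)^{2\beta}\,\dd t
= \frac{1}{2^l l!}\int^1_{-1} F(t)\,\frac{\dd^l}{\dd t^l}\!\left[P^{(\alpha,\beta)}_l(t)(1-t)^{\alpha}(1+t)^{\beta}\right]\dd t.
\end{equation*}

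The second step is to apply Rodrigues' formula a second time to the bracketed expression, which replaces the inner $l$th derivative by $(-1)^l(2^l l!)^{-1} F^{(2l)}(t)$. This yields the compact representation
\begin{equation*}
\int^1_{-1}\left(P^{(\alpha,\beta)}_l(t)\right)^2(1-t)^{2\alpha}(1+t)^{2\beta}\,\dd t=\frac{(-1)^l}{(2^l l!)^2}\int^1_{-1} F(t)\,F^{(2l)}(t)\,\dd t.
\end{equation*}

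The third step is to expand $F^{(2l)}$ by Leibniz's rule, using
\begin{equation*}
\frac{\dd^k}{\dd t^k}(1-t)^{l+\alpha}=(-1)^k\langle l+\alpha\rangle_k(1-t)^{l+\alpha-k},\qquad
\frac{\dd^{2l-k}}{\dd t^{2l-k}}(1+t)^{l+\beta}=\langle l+\beta\rangle_{2l-k}(1+t)^{l+\beta-(2l-k)},
\end{equation*}
and then to multiply by $F(t)$ and integrate term by term. Each resulting integral is $2^{2l+2\alpha+2\beta+1}B(2\alpha+2l-k+1,2\beta+k+1)$; using the relation
\begin{equation*}
B(2\alpha+2l-k+1,2\beta+k+1)=\frac{(2\alpha+1)_{2l-k}(2\beta+1)_k}{(2\alpha+2\beta+2)_{2l}}\,B(2\alpha+1,2\beta+1)
\end{equation*}
and $\binom{2l}{k}=\langle 2l\rangle_k/k!$, the sum condenses exactly into the polynomial $W_l(\alpha,\beta)$ of \eqref{eq0.012}, and the overall $(-1)^l$ is absorbed into the factors $(-1)^{l+k}$.

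The calculation is essentially mechanical once the boundary terms have been disposed of; the only conceptual point is the justification of the integration by parts across the possibly non-integer exponents, which the hypothesis $\Re\alpha,\Re\beta>-1/2$ is precisely tailored to handle. The main obstacle is therefore purely clerical: keeping the prefactors $(2^l l!)^{-2}$, the sign $(-1)^{l+k}$, and the conversion between $\binom{2l}{k}$, $\langle\cdot\rangle_k$, $(\cdot)_k$, $\Gamma$ and $B$ consistent so that the sum aligns with $W_l(\alpha,\beta)$ and the prefactor matches $2^{2\alpha+2\beta+1}(l!)^{-2}B(2\alpha+1,2\beta+1)/(2\alpha+2\beta+2)_{2l}$.
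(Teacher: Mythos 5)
Your argument is correct and is essentially the paper's own proof: both reduce the integral via Rodrigues' formula and $l$ integrations by parts to $(-1)^l(2^l l!)^{-2}\int_{-1}^1 F(t)\,F^{(2l)}(t)\,\dd t$ with $F(t)=(1-t)^{l+\alpha}(1+t)^{l+\beta}$, and then expand $F^{(2l)}$ by Leibniz's rule and evaluate termwise with Euler's beta integral exactly as you do; the only cosmetic difference is that the paper applies Rodrigues to both Jacobi factors at once and integrates by parts in $\int_{-1}^1\bigl(F^{(l)}(t)\bigr)^2\dd t$, whereas you apply it one factor at a time. One small caveat: your justification of the vanishing boundary terms tracks only the zero of $F^{(j)}$ at $\pm1$ and overlooks that the cofactor $\frac{\dd^{\,l-1-j}}{\dd t^{\,l-1-j}}\bigl[P^{(\alpha,\beta)}_l(t)(1-t)^{\alpha}(1+t)^{\beta}\bigr]$ can itself be singular there (like $(1-t)^{\alpha-l+1+j}$ at $t=1$); the terms do vanish, but because the \emph{combined} exponent has real part $2\Re\alpha+1>0$ at $t=1$ (and $2\Re\beta+1>0$ at $t=-1$), which is the accounting the hypothesis $\Re\alpha,\Re\beta>-1/2$ is really for.
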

\begin{proof}[Proof.]
Using Rodrigues' formula \eqref{eq9.11}, we can write
\begin{align}
\int^1_{-1}\left(P^{(\alpha, \beta)}_{l}(t)\right)^2 
(1-t)^{2\alpha}
(1+t)^{2\beta}\, \dd t =  \Big (\, \frac{1}{2^l\,l!}\,\Big)^2 \, I_l(\alpha, 
\beta)\,.
\label{eq0.011a}
\end{align}
where
\begin{align}
I_l(\alpha, \beta)=\int^1_{-1}\Big(\frac{\dd^l}{\dd t^l} 
\left [ (1-t)^{l+\alpha} (1+t)^{l+\beta} \right ] \Big)^2
\, \dd t \,.
\label{eq0.012a}
\end{align}
Integrating in \eqref{eq0.012a} $l$ times by parts, we obtain
\begin{align}
I_l(&\alpha, \beta)
\notag
\\
&=(-1)^l\,\int^1_{-1}\left( (1-t)^{l+\alpha} (1+t)^{l+\beta} \right)\,
\frac{\dd^{2l}}{\dd t^{2l}} \left( (1-t)^{l+\alpha} (1+t)^{l+\beta} \right)  
\, \dd t \,,
\label{eq0.013a}
\end{align}
since all terms outside the integral vanish.
By Leibniz's rule,
\begin{align*}
\frac{\dd^{2l}}{\dd t^{2l}}& \left(  (1-t)^{l+\alpha}\, (1+t)^{l+\beta} \right) 
\notag
\\ 
&=\sum\nolimits_{k=0}^{2l}\, {2l\choose k}\,\,
\frac{\dd^k}{\dd t^k} (1-t)^{l+\alpha}\,\,
\frac{\dd^{2l-k}}{\dd t^{2l-k}}\, (1+t)^{l+\beta} \, ,
\end{align*}
where ${2l\choose k}=\langle 2l \rangle_k /k!\,$ and 
\begin{align*}
\frac{\dd^k}{\dd t^k} (1-t)^{l+\alpha}=(-1)^k\,\langle \alpha +l \rangle_k\, 
(1-t)^{l-k+\alpha} \, ,
\\        
\frac{\dd^{2l-k}}{\dd t^{2l-k}} (1+t)^{l+\beta}=\langle \beta +l 
\rangle_{2l-k}\, (1+t)^{-l+k+\beta}\, .
\end{align*}
Substituting these formulas into \eqref{eq0.013a}, we obtain
\begin{align}
I_l(\alpha, \beta&)
\notag
\\
=&\,2^{2\alpha +2\beta +2l+1}\sum\nolimits_{k=0}^{2l}\frac{(-1)^{l+k}}{k!} 
\langle 2l \rangle_k \,\langle \alpha +l \rangle_k \,\langle \beta +l 
\rangle_{2l-k}\,
\,I_l^{(k)}(\alpha ,\beta)\, ,  
\label{eq0.012aa}
\end{align} 
where
\begin{equation} 
I_l^{(k)}(\alpha ,\beta)=B(2\alpha +2l-k+1, 2\beta +k+1).
\label{eq0.0ab} 
\end{equation}
Here we have used the following Euler's integral 
\begin{align}
2^{1-a-b}\int_{-1}^{1} (1-t)^{a-1}\,(1+t)^{b-1}\,\dd t =
B(a,b)= \frac{\Gamma (a)\Gamma (b)}{\Gamma (a+b)}
\label{eq0.02}
\end{align}
with $\Re a>0,\, \Re b>0$. 
The formula \eqref{eq0.0ab} can be written as follows
\begin{align}
&I_l^{(k)}(\alpha, \beta)=
\frac{\Gamma (2\alpha +2l-k+1)\,\Gamma (2\beta +k+1)}
{\Gamma (2\alpha +2\beta +2l+2)}
\notag
\\
\notag
\\
=&\frac{\Gamma (2\alpha +2l-k+1)}{\Gamma (2\alpha +1)}
\frac{\Gamma (2\beta +k+1)}{\Gamma (2\beta +1)}
\frac{\Gamma (2\alpha +1)\,\Gamma (2\beta +1)}{\Gamma (2\alpha +2\beta +2)}
\frac{\Gamma (2\alpha +2\beta +2)}{\Gamma (2\alpha +2\beta +2l+2)}
\notag
\\
\notag
\\
=&\frac{(2\alpha +1)_{2l-k}\, (2\beta +1)_k}{(2\alpha +2\beta +2)_{2l}}\, 
B(2\alpha +1, 2\beta +1 )\, .
\label{eq0.014}
\end{align} 

Combining the formulas \eqref{eq0.014}, \eqref{eq0.012aa} and \eqref{eq0.011a},
we obtain \eqref{eq0.011}.
\end{proof}

The next Lemma~3.2 is more specific, it relies on 
Watson's theorem for generalized hypergeometric series, see \cite{1*, 33a}.
We consider the series of the form 
\begin{align}
_3F_2(a, b, c; d, e; z)=
\sum\nolimits_{k\ge 0}\frac{(a)_k\, (b)_k\, (c)_k\,}{(d)_k\, (e)_k\, k!}\, z\, 
,  
\label{eq0.015}
\end{align}
where neither $d$ nor $e$ are negative integers.  
The series absolutely converges for $\vert z\vert\le 1$, if 
$\Re (d+e)>\Re (a+b+c)$. 
The series \eqref{eq0.015} terminates, if one of the numbers  $a, b, c$ 
is a negative integer. 

\bfseries Watson's theorem.\mdseries {\it We have}
\begin{align}
_3F_2(a, &b, c; (a+b+1)/2, 2c; 1)
\notag
\\
=&\frac{\Gamma (1/2)\,\Gamma (c+1/2)\,\Gamma ((a+b+1)/2)\,\Gamma 
(c-(a+b-1)/2)\,}
{\Gamma ((a+1)/2)\,\Gamma ((b+1)/2)\,\Gamma (c-(a-1)/2)\,\Gamma (c-(b-1)/2)\,} 
\, .  
\label{eq0.016}
\end{align}  
{\it provided that}  
\begin{equation}
\Re\, (2c - a -b+1) > 0. 
\label{eq0.017}
\end{equation}

The condition \eqref{eq0.017} ensures the convergence of hypergeometric series 
in \eqref{eq0.016}. Furthermore, this condition is necessary for the truth 
of equality \eqref{eq0.016} even in the case of terminated series. The proof of
Watson's theorem can be found in \cite[Therem~3.5.5]{1*}, 
\cite[p.54, Eq.(2.3.3.13)]{33a}.

\begin{lemma}\label{lem4.2}
For all $l\ge 0$, $\alpha \in \Cc$ and $\beta\in\Cc$,	
the polynomial \eqref{eq0.012} is equal to
\begin{align}
W_l(\alpha, \beta)=&2^{2l}\,(\alpha +1)_l\,(\beta +1)_l\,(\alpha +\beta +1)_l
\notag
\\
=&2^{2l}\,\frac{\Gamma(\alpha +1+l)\,\Gamma(\beta +1+l)\,\Gamma(\alpha +\beta 
+1+l)}
{\Gamma(\alpha +1)\,\Gamma(\beta +1)\,\Gamma(\alpha +\beta +1)}\, .
\label{eq0.018}
\end{align} 
In particular, 
\begin{align}
\frac{W_l(\alpha, \beta)}{(2\alpha +2\beta +2)_{2l}}=
\frac{(\alpha +1)_l\, (\beta +1)_l}{(\alpha +\beta +3/2)_l}\, .
\label{eq0.018a}
\end{align} 
\end{lemma}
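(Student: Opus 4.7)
The plan is to recognize the alternating sum defining $W_l(\alpha,\beta)$ as a terminating hypergeometric series ${}_3F_2(\cdots;1)$ of Watsonian type and then apply the Watson theorem stated just above. The first step would be to convert the falling factorials in \eqref{eq0.012} to rising factorials via $\langle a\rangle_k=(-1)^k(-a)_k$ (cf.\ \eqref{eq8.26***}), and to reduce the factors of index $2l-k$ to factors of index $k$ through the standard identity $(a)_{2l-k}=(-1)^k(a)_{2l}/(1-a-2l)_k$ applied to $a=-\beta-l$ and $a=2\alpha+1$. After these substitutions, every $k$-dependent sign collapses into a single overall factor $(-1)^l$, and the sum takes the shape
$$
W_l(\alpha,\beta)=(-1)^l(-\beta-l)_{2l}(2\alpha+1)_{2l}\cdot{}_3F_2\!\left(\begin{matrix}-2l,\,-\alpha-l,\,2\beta+1\\ 1+\beta-l,\,-2\alpha-2l\end{matrix};\,1\right).
$$

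The second step is to verify that this ${}_3F_2$ fits Watson's template with the unique choice $a=-2l$, $b=2\beta+1$, $c=-\alpha-l$: indeed $(a+b+1)/2=1+\beta-l$ and $2c=-2\alpha-2l$, matching the denominator parameters exactly. Among the three possible ways of assigning the numerator parameters to $(a,b,c)$, this is the only assignment that places $c$ inside the available set, since $2c$ must equal one of the denominator parameters. Watson's convergence condition $\Re(2c-a-b+1)>0$ becomes $\Re(\alpha+\beta)<0$, a non-empty open set; because $W_l$ and the claimed right-hand side of \eqref{eq0.018} are both polynomials in $\alpha,\beta$, the identity will extend to all of $\Cc^2$ by analytic continuation.

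The third step is to insert these parameters into \eqref{eq0.016} and simplify the resulting ratio of eight gamma functions. The routine ingredients are $\Gamma(z-l)/\Gamma(z)=1/(z-l)_l$, the special value $\Gamma(1/2-l)=(-1)^l\sqrt{\pi}/(1/2)_l$, the decomposition $(-\beta-l)_{2l}=(-\beta-l)_l(-\beta)_l=(-1)^l(\beta+1)_l(-\beta)_l$, and the duplication identity $(2\alpha+1)_{2l}=2^{2l}(\alpha+1/2)_l(\alpha+1)_l$. The $(\alpha+1/2)_l$ and $(-\beta)_l$ factors cancel between the ${}_3F_2$ value and the prefactor, leaving the closed form asserted in \eqref{eq0.018}. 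Formula \eqref{eq0.018a} then follows by one more application of duplication to $(2\alpha+2\beta+2)_{2l}=2^{2l}(\alpha+\beta+1)_l(\alpha+\beta+3/2)_l$.

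The conceptual content is essentially exhausted by the recognition of the Watsonian pattern; the main obstacle is bookkeeping the many sign and Pochhammer conversions without error, and picking the correct parameter assignment on the first try. A convenient sanity check at $l=1$ (or $\alpha=\beta=0$) can be used to confirm that no constant factor has been mis-tracked in the gamma-function manipulations.
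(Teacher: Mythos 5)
Your proposal is correct and follows essentially the same route as the paper: the same Pochhammer conversions produce the same prefactor $(-1)^l(-\beta-l)_{2l}(2\alpha+1)_{2l}$ and the same ${}_3F_2(-2l,\,2\beta+1,\,-\alpha-l;\,\beta+1-l,\,-2\alpha-2l;\,1)$, Watson's theorem is applied with the identical parameter assignment on the region $\Re(\alpha+\beta)<0$, and the identity is extended to all of $\Cc^2$ by the same polynomiality argument. The only cosmetic difference is in the bookkeeping of the final gamma-function simplification, where the paper uses the reflection formulas \eqref{eq0.43} and you use equivalent Pochhammer identities.
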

\begin{proof}[Proof.]
Since $W_l(\alpha, \beta)$ is a polynomial, it suffers to check 
the equality \eqref{eq0.018} for $\alpha$ and $\beta$ in an open subset in $\Cc 
^2$. As such a subset we will take the following region
\begin{equation} 
\O=\{\,\alpha,\,\beta\, : \Re\alpha <0,\,\,\Re\beta <0, \,\, \Im\alpha 
>0,\,\,\Im\beta >0\,\}.
\label{0.019}   
\end{equation} 
For $\alpha$ and $\beta$ in $\O$, the co-factors in terms in \eqref{eq0.012}
may be rearranged as follows: 
\begin{equation}
\left.
\begin{aligned}
&\langle 2l \rangle_k =(-1)^k\,(-2l)_k \, ,\quad
\langle \alpha +l \rangle_k = (-1)^k\, (-\alpha -l)_k\, , \\
&\langle \beta +l \rangle_{2l-k}=(-1)^k\,(-\beta -l)_{2l-k}=
\frac{(-\beta -l)_{2l}}{(\beta +1-l)_k}\, ,\quad \\
&(2\alpha +1)_{2l-k} = \frac{(-1)^k(2\alpha +1)_{2l}}{(-2\alpha -2l)_k}\, ,
\end{aligned}
\label{eq1.34a}
\right\}
\end{equation}
Here we have used the following elementary relation for the rising factorial 
powers
\begin{equation}
(a)_{m-k}=\frac{(-1)^k\,(a)_m}{(1-a-m)_k}\, ,\quad m\ge 0\, ,\,\, 0\le k \le m 
\, .
\end{equation}
Substituting \eqref{eq1.34a} into \eqref{eq0.012}, we find that 
\begin{align}
W_l(\alpha, \beta)&=
(-1)^l\,(2\alpha +1)_{2l}\,(-\beta -l)_{2l}\,\FFF_l(\alpha , \beta )
\notag
\\
&=\frac{(-1)^l\,\Gamma (2\alpha +1+2l)\,\Gamma (-\beta +l)}
{\Gamma (2\alpha +1)\,\Gamma (-\beta -l)}\,\, \FFF_l(\alpha , \beta )\, ,
\label{eq0.35a}
\end{align} 
where 
\begin{equation} 
\FFF_l(\alpha , \beta )=\sum\nolimits_{k=0}^{2l}\frac
{(-2l)_k\,(2\beta +1)_k\,(-\alpha -l)_k}
{(\beta +1-l)_k\,(-2\alpha -2l)_k\,k!}
\label{eq0.355}
\end{equation}
In view of the definition \eqref{eq0.015}, we have
\begin{align}
\FFF_l(\alpha , \beta )=\,
_3F _2\,(-2l, 2\beta +1, -\alpha -1; \beta 
+1-l, -2\alpha -2l; 1)\, .
\label{eq0.40}
\end{align} 
The parameters in hypergeometric series  
\eqref{eq0.40} are identical  with those in \eqref{eq0.016} for
$a=-2l, \, b=2\beta +1, \, c= -\alpha -l$,
and in this case, $(a+b+1)/2 = 2\beta +1 +l$, $2c = -2\alpha -2l$.
The condition \eqref{eq0.017} also holds for $\alpha$ and $\beta$ 
in the region $\O$, since $\Re\, (2c - a -b+1)=\Re\, (-2\alpha -2\beta ) >0$.
Therefore, Watson's theorem \eqref{eq0.016} can be applied to obtain 
\begin{align}
\FFF_l(\alpha , \beta )=\,
\frac{\Gamma(1/2)\,\Gamma(-\alpha -l-1/2)\,\Gamma(\beta +1-l)\,
\Gamma(-\alpha -\beta)}
{\Gamma(-l+ 1/2)\,\Gamma(\beta +1)\,\Gamma(-\alpha +1/2)\,
\Gamma(-\alpha -\beta -l)}
\, .
\label{eq0.41}
\end{align} 

Substituting the expression \eqref{eq0.41} into \eqref{eq0.35a}, we may write
\begin{equation}
W_l(\alpha, \beta )=\, c_0\,\, c_1(\alpha )\,\, c_2(\beta )\,\,c_3(\alpha 
+\beta )\, ,
\label{eq0.42}
\end{equation}
where
\begin{equation}
\left.
\begin{aligned}
&c_0 = \frac{(-1)^l\, \Gamma (1/2)}{\Gamma (-l+1/2)}\, ,\\ 
&c_1(\alpha)=\frac{\Gamma (2\alpha +2l+1)\,\Gamma (-\alpha -l+1/2)}
{\Gamma (2\alpha +1)\,\Gamma (-\alpha +1/2)}\,,\quad \\
&c_2(\beta)= \frac{\Gamma (\beta +1-l)\,\Gamma (-\beta +l)}
{\Gamma (\beta +1)\,\Gamma (-\beta -l)}\, ,\\
&c_3(\alpha +\beta)=\frac{\Gamma (-\alpha -\beta )}
{\Gamma (-\alpha -\beta -l)}\, .
\end{aligned}
\label{eq1.45}
\right\}
\end{equation}

Using the duplication formula \eqref{eq0.09} and reflection formulas,
see \cite[Sec.~1.2]{1*}, 
\begin{equation}
\Gamma (1-z)\Gamma (z)\, =\, \frac{\pi}{\sin \pi z}\, , \qquad 
\Gamma (1/2 -z)\Gamma (1/2 +z)\, =\, \frac{\pi}{\cos \pi z} \, ,
\label{eq0.43}    
\end{equation}
we may rearrange the expressions in \eqref{eq1.45} as follows.
For $c_0$, we have
\begin{align*}
&c_0 = \frac{(-1)^l\,\Gamma (1/2)^2}{\Gamma (-l+1/2)\,\Gamma (l+1/2)}\, 
\frac{\Gamma (l+1/2)}{\Gamma (1/2)}=(1/2)_l \, ,
\end{align*} 
since $\Gamma(1/2)=\sqrt\pi $.
For $c_1(\alpha)$ and $c_2(\beta)$, we have
\begin{align*}
c_1(\alpha)=&2^{2l}\,
\frac{\Gamma(\alpha +l+1)\,\Gamma(\alpha +l+1/2)\,\Gamma(-\alpha -l+1/2)}
{\Gamma(\alpha +1)\,\Gamma(\alpha +1/2)\,\Gamma(-\alpha +1/2)}
\\
\\
=&2^{2l}\,
\frac{\cos\pi\alpha\,\Gamma(\alpha +l+1)}{\cos\pi(\alpha+l)\,\Gamma(\alpha +1)}
=2^{2l}\,(-1)^l\,(\alpha +1)_l
\end{align*}
and
\begin{align*}
c_2(\beta) = \frac{\Gamma (\beta +1-l)\,\Gamma (-\beta +l)}
{\Gamma (\beta +1)\,\Gamma (-\beta -l)}
=\frac{\sin\pi (\beta +l)\,\Gamma (\beta +1+l)}
{\sin\pi (\beta -l)\,\Gamma (\beta +1)}=(\beta +1)_l \, .
\end{align*}
Finally,
\begin{align*}
c_3(\alpha +\beta)=\frac{\sin\pi (\alpha +\beta)\,\Gamma (\alpha +\beta 
+1+l)}
{\sin\pi (\alpha +\beta +l)\,\Gamma (\alpha +\beta +1)}=(-1)^l\, 
(\alpha +\beta +1)_l
\, .
\end{align*}
Substituting these expressions into \eqref{eq0.42}, we obtain \eqref{eq0.018}.

It follows from \eqref{eq8.26*} and the duplication formula \eqref{eq0.09}
that 
\begin{equation}
(2\alpha +2\beta +2)_{2l}=
2^{2l}\,(\alpha +\beta +1)_l\, (\alpha +\beta +3/2)_l \, .
\label{eq0.47}
\end{equation}
Using \eqref{eq0.018} together with \eqref{eq0.47}, we obtain \eqref{eq0.018a}.
\end{proof}
Now it suffers to substitute \eqref{eq0.018a} into \eqref{eq0.011} 
to obtain  the formulas \eqref{eq01}. The proof of Lemma~1.2 
is complete.



\begin{thebibliography}{99}


\bibitem{2} 
J.~R.~Alexander, J.~Beck, W.~W.~L.~Chen,
\emph{Geometric discrepancy theory and uniform distributions}, in Handbook of 
Discrete and Computational Geometry (J.~E.~Goodman and J.~O'Rourke eds.), 
Chapter 10, pages 185--207, 
CRC Press LLC, Boca Raton, FL, 1997.

\bibitem{1*}
G.~E.~Andrews, R.~Askey, R.~Roy,
\emph{Special functions}, Cambridge Univ. Press, 2000.


\bibitem{3} 
A.~Barg,
\emph{Stolarsky's invariance principle for finite metric spaces}, 
Mathematika, {\bf 67}(1), (2021), 158--186.

\bibitem{3*} 
A.~Barg, M.M.~Skriganov,
\emph{Bounds for discrepancies in the Hemming space}, 
J. of Complexity, {\bf 65}, (2021), 101552.






\bibitem{5}
J.~Beck, 
\emph{Sums of distances between points on a sphere: An application of the 
theory of irregularities of distributions to distance geometry}, 
Mathematika, {\bf 31}, (1984), 33--41.

\bibitem{6}
J.~Beck, W.~W.~L.~Chen,
\emph{Irregularities of Distribution}, Cambridge Tracts in Math., 
vol.~89, Cambridge Univ. Press, 1987.



\bibitem{8}
D.~Bilyk, M.~Lacey,
\emph{One bit sensing, discrepancy, and Stolarsky principle}, Sbornik Math.,
{\bf 208}(6), (2017), 744--763. 


\bibitem{8a}
D.~Bilyk, F.~Dai, R.~Matzke,
\emph{ Stolarsky principle and energy optimization on the sphere}, 
Constr. Approx., {\bf 48}(1), (2018), 31--60. 

\bibitem{8b}
D.~Bilyk, R.~Matzke, O.~Vlasiuk,
\emph{Positive definiteness and the Stolarsky principle}, 
J. of Math. Analysis, {\bf 513}(1), (2022), 126220.





\bibitem{11}
J.~S.~Brauchart, J.~Dick,
\emph{A simple proof of Stolarsky's invariance principle}, Proc. Amer. 
Math. Soc., {\bf 141}, (2013), 2085--2096. 


\bibitem{11*}
J.~S.~Brauchart, P.~J.~Grabner,
\emph{Weighted $L^2$-norms of Gegenbauer polynomials}, Aequat. Math.,
{\bf 96}, (2022), 741--762.





\bibitem{14}
H.~Cohn, A.~Kumar, G.~Minton,
\emph{Optimal simplices and codes in projective spaces},  Geometry and 
Topology, {\bf20} (2016), 1289--1357.  

\bibitem{15}
J.~Conway, R.~Hardin, N.~J.~A.~Sloane, 
\emph{Packing lines, planes, etc.: packing in Grassmannian spaces}, 
Experiment. Math., {\bf 5}, (1996), 139--159.


\bibitem{17}
M.~M.~Deza, M.~Laurent,
\emph{Geometry of cuts and metrics}, Springer, 1997.

\bibitem{17*}
F.~Dai, Y.~Xu,
\emph{Approximation theory and harmonic analysis on spheres and balls}, 
Springer, 2013.


\bibitem{19}
R.~Gangolli,
\emph{Positive definite kernels on homogeneous spaces and certain stochastic 
processes related to L\'evy's Brownian motion of several parameters}, 
Ann. Inst. Henri Poincar\'e, vol.~III, No.~2, (1967), 121--325.

\bibitem{01}
I.S.~Gradshteyn, I.M.~Ryzhik,
\emph{Tables of integrals, series, and products}, 7th ed. (A.~Jeffrey, 
D.~Zwillinger, Eds.), Elsevier, 2007.




\bibitem{21}
S.~Helgason,
\emph{Differential Geometry, Lie Groups, and Symmetric Spaces}, 
Academic Press, 1978.

\bibitem{22}
S.~Helgason,
\emph{Groups and geometric analysis. Integral geometry, invariant 
differential operators, and spherical functions}, Academic Press, 1984.


\bibitem{24}
V.~I.~Levenshtein,
\emph{Universal bounds for codes and designs}, in Handbook of Coding 
Theory (V.~S.~Pless and W.~C.~Huffman eds.), Chapter~6, pages 499--648, 
Elsevier, 1998.

\bibitem{24**}
C.~M\"uller. \emph{Spherical Harmonics}, Lecture Notes in 
Math., {\bf 17}. Springer, 1966.



\bibitem{19**}
A.~V.~Shchepetilov,
\emph{Calculus and Mechanics on two-point homogeneous spaces}, 
Springer, 2006.

\bibitem{30}
M.~M.~Skriganov,
\emph{Point distributions in compact metric spaces}, 
Mathematika, {\bf 63}, (2017), 1152--1171.


\bibitem{31}
M.~M.~Skriganov,
\emph{Point distributions in two-point homogeneous spaces}, 
Mathematika, {\bf 65}, (2019), 557--587.





\bibitem{0}
M.~M.~Skriganov,
\emph{Stolarsky's invariance principle for projective spaces}, 
J. of Complexity, {\bf 56}, (2020), 101428.


\bibitem{33a}
L.~J.~Slater,
\emph{Generalized hypergeometric functions}, Cambridge Univ. Press, 1966.


\bibitem{33}
K.~B.~Stolarsky,
\emph{Sums of distances between points on a sphere, II}, Proc. Amer. Math. 
Soc., {\bf 41}, (1973), 575--582.

\bibitem{34}
G.~Szeg\H o ,
\emph{Orthogonal polynomials}, Amer. Math. Soc., 1950.


\bibitem{34*}
S.~S.~Tai, 
\emph{Minimum embeddings of compact symmetric spaces of rank one}, 
J.Differential Geometry, {\bf 2}, (1968), 55--66.


\bibitem{35}
N.~Ja.~Vilenkin, A.~U.~Klimyk, 
\emph{Representation of Lie groups and special functions}, vols.~1--3, 
Kluwer Acad. Pub., Dordrecht, 1991--1992.

\bibitem{36}
J.~A.~Wolf,
\emph{Spaces of constant curvature}, Univ. Califormia, Berkley, 1972.

\bibitem{37}
J.~A.~Wolf,
\emph{Harmonic analysis on commutative spaces}, Math. Surveys and 
Monographs, vol.~142, Amer. Math. Soc.,  2007.


\end{thebibliography}
\end{document}